\newtheorem{proposition}{Proposition}
\newtheorem{corollary}{Corollary}
\begin{document}

\title{Explicit formulae for derivatives and primitives of orthogonal polynomials\thanks{Research funded by the European Regional Development Fund through the program COMPETE and by the Portuguese Government through the FCT -- Funda\c c\~ao para a Ci\^encia e a Tecnologia under the project PEst-C/MAT/UI0144/2013.}}

\author{Jos\'e M. A. Matos\footnote{Centro de Matem\'atica da Universidade do Porto and Instituto Superior de Engenharia do Porto, Rua Dr. Ant\'onio Bernardino de Almeida, 431, 4200-072 Porto, Portugal, jma@isep.ipp.pt}, Maria Jo\~ao Rodrigues\footnote{Faculdade de Ci\^encias da Universidade do Porto and Centro de Matem\'atica da Universidade do Porto}, Jo\~ao Carrilho de Matos\footnote{Instituto Superior de Engenharia do Porto}}

\maketitle

\begin{abstract}
In this work we deduce explicit formulae for the elements of the matrices that represent the action of integro-differential operators over the coefficients of generalized Fourier series.
Our formulae are obtained by performing operations on the bases of orthogonal polynomials and result directly from the three-term recurrence relation satisfied by the polynomials.
Moreover we give exact formulae for the coefficients for some families of orthogonal polynomials. 

Some tests are given to demonstrate the robustness of the formulas presented.

\end{abstract}

\textbf{keywords:} Integro-differential equations; Operational matrices;  Orthogonal polynomials

\section{Introduction}

In some spectral methods the operational matrices that transform integro-differential problems into algebraic problems are often obtained using a similarity transformation \cite{OS}. For high degree approximation the accuracy of the approximate solutions is degraded by the bad conditioning of the matrices involved. 
In recent works, dealing with the extension of spectral methods to systems of nonlinear integro-differential problems \cite{VMT17} and to problems with non-polynomial coefficients \cite{TVM17}, the error propagation when working with operational matrices is referred as a drawback.  This fact is of great importance when there is need of a large number of coefficients  computed with great precision, as it is in the case with  Frobenius-Pad\'e approximation allowing the computation of rational approximants of series with unknown coefficients \cite{MMR}.

Several authors have studied different approaches  with the purpose of stabilizing the spectral method, by introducing modifications in the way of representing the solution and thus obtaining a better conditioned algebraic system. The idea is to represent the action of integro-differential operators acting on a basis of orthogonal polynomials, on the same basis. But these works refer, in general, to very particular cases either of  operators or of orthogonal bases. For example see, for  Chebyshev polynomials \cite{DSW}, \cite{Greengard}, \cite{KW}, \cite{SKR}, for Legendre polynomials \cite{CW}, \cite{Koepf}, \cite{KD}, \cite{Phillips}, for Jacobi polynomials \cite{Doha}, \cite{KD}, for Bessel polynomials \cite{DA}, \cite{KD} or for Hermite and Laguerre polynomials \cite {KD}.
Here we go in this same direction of avoiding the similarity transformation, but our work is more general and results only and directly from the  three term recurrence relation satisfied by the orthogonal polynomials used. We have established recurrence relations for the operational matrix elements in general,  and   we  give explicit formulas for those elements when we consider families of classical orthogonal polynomials.

Our procedure being more general,  unifies and  includes  the  cases cited above, with advantages from the point of view of  the automation of the algorithms as well as  from   the numerical point of view.

In Section 2  we present  the recursive formulas for the representation of derivative and primitive sequences of orthogonal polynomials expressed in terms of the polynomials themselves. The concretization of these formulas in particular cases of families of classical orthogonal polynomials results in explicit formulas, some of which have already been mentioned in the literature. In Section 3 the results of the previous section are used for the algebraic representation of integral-differential operators and  in section 4 some numerical tests to the robustness of the developed formulas are presented.

\section{Algebraic and Analytic Operations on Polynomials}\label{Oper_on_Polyn}

In this section we introduce a set of formulas representing the effect of integro-differential operations over the  coefficients of a formal series. Those formulas are presented in the form of matrix operations and we will present exact formulas to their elements.

First we present formulas for generic orthogonal polynomials. Some of them are explicit formulas and others are given as recurrence relations. In a second subsection we present explicit formulas to the case of classical orthogonal polynomials.

\subsection{Formulas for general orthogonal polynomials}

In what follows, ${\cal P}=(P_0,P_1,\ldots)$ is the orthogonal polynomial sequence defined by an inner product
\begin{equation}\label{PiPj}
<P_i,P_j>=\int^{b}_{a} P_i\ P_j\ w dx = ||P_i||^2\delta_{ij},\ i,j\in\mathbb{N}_0 
\end{equation}
where $\delta_{ij}$ is the Kronecker symbol.
A well known property of $\cal P$ is that it constitutes a basis for $\mathbb{P}$, the space of polynomials of any degree. Another property of $\cal P$ concerns the coefficients of formal series.

\begin{proposition}
       Lets $f$ be a function represented by an expansion over a basis of orthogonal polynomials $\mathcal{P}=(P_0,P_1,\ldots)$ satisfying  \eqref{PiPj},
       $$
       f=\sum_{i=0}^{\infty}f_{i}P_i
       $$
       \noindent then,
       \begin{equation}\label{fi}
	 f_i=\frac{1}{||P_i||^2}<P_i,f> 
	 \end{equation}
\noindent where, the equality only holds when the infinite series converge to $f$.
\end{proposition}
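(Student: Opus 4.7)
The plan is to compute the inner product $\langle P_i, f\rangle$ directly from the expansion and exploit orthogonality. Since the claim explicitly restricts the equality to the case when the infinite series converges to $f$, I do not need to worry about the general situation of formal series; I can treat the sum as an honest function.

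First I would take the inner product of both sides of $f = \sum_{j=0}^\infty f_j P_j$ with $P_i$:
\begin{equation*}
\langle P_i, f\rangle = \Bigl\langle P_i, \sum_{j=0}^\infty f_j P_j \Bigr\rangle.
\end{equation*}
Next I would interchange the sum and the integral defining the inner product in \eqref{PiPj}. This is the only nontrivial step: it needs the convergence hypothesis that is built into the statement, together with a standard justification (for instance, uniform convergence on $[a,b]$ and continuity of $w$, or dominated convergence against the weight $w$) to move $\langle P_i,\cdot\rangle$ through the series. After this exchange I get
\begin{equation*}
\langle P_i, f\rangle = \sum_{j=0}^\infty f_j \langle P_i, P_j\rangle.
\end{equation*}

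Then I would apply the orthogonality relation $\langle P_i, P_j\rangle = \|P_i\|^2 \delta_{ij}$ from \eqref{PiPj}, which collapses the sum to its single nonzero term $f_i \|P_i\|^2$. Dividing by $\|P_i\|^2$, which is nonzero since $P_i$ is a nontrivial element of an orthogonal family, yields \eqref{fi}.

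The main (and only real) obstacle is the legitimacy of swapping the infinite sum with the integral in the inner product; everything else is a one-line use of orthogonality. The authors have sidestepped this by stipulating convergence, so in the writeup I would simply note that under the convergence assumption the swap is valid, and otherwise the identity is to be read formally term by term.
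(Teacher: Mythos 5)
Your argument is correct and is exactly the standard computation the authors have in mind: the paper states this proposition without any written proof, implicitly relying on taking $\langle P_i,\cdot\rangle$ of the expansion, interchanging sum and integral, and collapsing the sum via $\langle P_i,P_j\rangle=\|P_i\|^2\delta_{ij}$. Your added remark that the interchange is the only step requiring the convergence hypothesis (and that otherwise the identity is read formally) is a fair and accurate gloss on the clause ``the equality only holds when the infinite series converge to $f$.''
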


 This is the key property to show the following proposition.

\begin{proposition}\label{prop:L}
	If $l$ is a linear operator acting on $\mathbb{P}$ and $L$ is the infinite matrix defined by
	\begin{equation}
	L=[l_{ij}]_{i,j\geq 0},\quad \text{with}\quad l_{ij} = \frac{1}{||P_i||^2} <P_i,lP_j>,
	\end{equation}
	then formally $l{\cal P}={\cal P}L$.
	\begin{proof}
		For each $j=0,1,\ldots$ we define the infinite unitary vector $e_j=[\delta_{ij}]_{i\geq 0}$. So that ${\cal P} e_j = P_j$ and using \eqref{fi} we get
		\[ l {\cal P} e_j = l P_j = \sum_{i\geq 0} \frac{<P_i, l P_j>}{||P_i||^2}  P_i = \sum_{i\geq 0} l_{ij} P_i = {\cal P} L e_j,\ j=0,1,\ldots  \]
		and so  $l {\cal P}={\cal P} L$, in the element wise sense.
	\end{proof} 
\end{proposition}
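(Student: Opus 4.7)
The plan is to unpack the meaning of the formal equality $l\mathcal{P} = \mathcal{P}L$ columnwise, using the expansion of $lP_j$ in the orthogonal basis supplied by the preceding proposition.

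First I would fix notation: think of $\mathcal{P} = (P_0, P_1, \ldots)$ as an infinite row of polynomials, so that for the standard unit column vector $e_j = [\delta_{ij}]_{i\geq 0}$ we have $\mathcal{P}e_j = P_j$. Then $\mathcal{P}L e_j$ is, by definition of matrix multiplication, $\sum_{i\geq 0} l_{ij} P_i$, which is the $j$-th column of the product $\mathcal{P}L$ read as a formal linear combination of the $P_i$. Showing $l\mathcal{P} = \mathcal{P}L$ in the elementwise sense therefore reduces to checking, for each $j\geq 0$, the identity
\[
lP_j = \sum_{i\geq 0} l_{ij}\,P_i.
\]

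Next I would invoke the previous proposition. Since $l$ maps $\mathbb{P}$ to $\mathbb{P}$, the image $lP_j$ is itself a polynomial, hence lies in the span of $\mathcal{P}$; in particular the expansion of $lP_j$ has only finitely many nonzero coefficients, so no convergence question arises. Applying formula \eqref{fi} with $f = lP_j$ yields the $i$-th coefficient
\[
\frac{\langle P_i, lP_j\rangle}{\|P_i\|^2} = l_{ij},
\]
which is exactly the $(i,j)$-entry of $L$ as defined in the statement. Substituting this into the expansion gives the required identity $lP_j = \sum_i l_{ij}P_i$, and the proposition follows.

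There is essentially no hard step here; the content is bookkeeping about how the matrix $L$ acts columnwise on the symbolic row $\mathcal{P}$. The one subtlety I would flag is the word \emph{formally} in the statement: the equality $l\mathcal{P} = \mathcal{P}L$ is meant entrywise, since the product $\mathcal{P}L$ is an infinite formal sum that need not make analytic sense in general. For polynomial inputs $P_j$ this caveat is vacuous because the sum terminates, but it becomes relevant later when $l$ is applied to genuine series $f = \sum f_i P_i$ and one wants to justify manipulating $L$ on the coefficient vector $(f_i)$.
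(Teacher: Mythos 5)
Your proposal is correct and follows essentially the same route as the paper: expand $lP_j$ via formula \eqref{fi}, identify the coefficients with the entries $l_{ij}$, and read off the columnwise identity $l\mathcal{P}e_j = \mathcal{P}Le_j$. The extra remarks on the finiteness of the expansion and the meaning of ``formally'' are sensible elaborations but do not change the argument.
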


A characteristic property of orthogonal polynomials is that they satisfies a three term recurrence relation. The actual values for the recurrence relation coefficients depends on a normalization choice. In the sequel we always consider $P_0=1$ and the recurrence relation in the form
\begin{equation}\label{ttrr}
\left\{\begin{array}{l}
x P_j = \alpha_j P_{j+1} + \beta_j P_{j} + \gamma_j P_{j-1},\ j\geq 0 \\
P_{-1}=0,\ P_0=1
\end{array}\right. ,
\end{equation}
and we will show that this is enough to determine matrices $L$ representing in $\cal P$ the action of linear integro-differential operators $l$.

We start by the operator that to each element $p\in\mathbb{P}$ associate the polynomial $x p$, usually referred as the shift operator.

\begin{proposition}\label{prop:xP}
	Let $\cal P$ be a basis satisfying \eqref{ttrr}, defining 
	\[
	M=[\mu_{i,j}]_{i,j\geq 0},\quad \text{with}\quad \mu_{i,j} = \frac{1}{||P_i||^2}<P_i,x P_j>,
	\]
	then 
	\begin{equation}\label{mu} 
		\begin{cases}
		\mu_{0,0} = \beta_0,\ \mu_{1,0} =  \alpha_0 \\
		\mu_{j-1,j} = \gamma_j,\  \mu_{j,j} = \beta_j,\ \mu_{j+1,j} =  \alpha_j \\ 
		\mu_{i,j} = 0,\ |i-j|>1 
		\end{cases},\ j=1,2,\ldots	
	\end{equation}
	and  $x {\cal P}={\cal P} M$.
	\begin{proof}
		From the definition of $\mu_{i,j}$ and \eqref{fi} follows that
		\[ xP_j=\sum_{i=0}^{j+1} \mu_{i,j} P_i,\ j=0,1,\ldots \]
		and using \eqref{ttrr} we get \eqref{mu}. The fact that $x {\cal P}={\cal P} M$, in the element wise sense, is a consequence of proposition \ref{prop:L}.
	\end{proof} 
\end{proposition}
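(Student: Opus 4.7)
The plan is to observe that this proposition is essentially a direct specialization of Proposition \ref{prop:L} to the linear operator $l:p\mapsto xp$, combined with the three-term recurrence \eqref{ttrr}. There is no deep content beyond unpacking definitions; the only work is matching coefficients.

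First I would invoke Proposition \ref{prop:L} with $l$ being multiplication by $x$. Since multiplication by $x$ is linear on $\mathbb{P}$, the proposition immediately yields $x\mathcal{P} = \mathcal{P}M$ with $M=[\mu_{i,j}]$ defined by $\mu_{i,j}=\langle P_i, xP_j\rangle/\|P_i\|^2$. Thus the identity $x\mathcal{P}=\mathcal{P}M$ comes for free, and only the explicit values of $\mu_{i,j}$ remain to be computed.

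Next I would compute those entries by reading them off the recurrence \eqref{ttrr}. For $j\geq 1$, the relation $xP_j = \alpha_j P_{j+1}+\beta_j P_j+\gamma_j P_{j-1}$ is precisely the expansion of $xP_j$ in the orthogonal basis $\mathcal{P}$. By orthogonality, taking the inner product with $P_i$ and dividing by $\|P_i\|^2$ picks out exactly the coefficient of $P_i$ in this expansion, which gives $\mu_{i,j}=0$ whenever $|i-j|>1$, and $\mu_{j-1,j}=\gamma_j$, $\mu_{j,j}=\beta_j$, $\mu_{j+1,j}=\alpha_j$ otherwise. The $j=0$ column is handled separately using $P_{-1}=0$: the recurrence collapses to $xP_0=\alpha_0 P_1+\beta_0 P_0$, giving $\mu_{0,0}=\beta_0$ and $\mu_{1,0}=\alpha_0$.

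The only mild care needed is the bookkeeping at $j=0$, where the $\gamma_0 P_{-1}$ term disappears and one should not write a $\mu_{-1,0}$ entry; this is what forces the statement to split off the $j=0$ case from the generic column formula. Beyond that, no obstacle is expected: the proof is a one-line application of Proposition \ref{prop:L} followed by pattern-matching against \eqref{ttrr}.
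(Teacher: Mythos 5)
Your proposal matches the paper's proof: both invoke Proposition \ref{prop:L} for the identity $x\mathcal{P}=\mathcal{P}M$ and obtain the entries of $M$ by identifying the expansion $xP_j=\sum_{i=0}^{j+1}\mu_{i,j}P_i$ (via orthogonality and \eqref{fi}) with the three-term recurrence \eqref{ttrr}. Your explicit note about the $j=0$ column and the vanishing $\gamma_0 P_{-1}$ term is a detail the paper leaves implicit, but the argument is the same.
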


For the differential operator, we define a matrix $N = [\eta_{i,j}]_{i,j\geq 0}$ such that $P'_j=\sum_{i=0}^{j-1}\eta_{ij}P_i,\ j\geq 0$. So $\eta_{ij}=0,\ i\geq j$, that is, $N$ is an upper triangular matrix with null main diagonal. Using \eqref{ttrr} leads to a relation between neighbour matrix elements, allowing their evaluation by recurrence. 

\begin{proposition}\label{prop:dP}
	Let $\cal P$ be a basis satisfying \eqref{ttrr}, defining 
	\[
	N = [\eta_{i,j}]_{i,j\geq 0},\quad \text{with}\quad \eta_{i,j} = \frac{1}{||P_i||^2}< P_i, \frac{d}{dx} P_j >,
	\]
	then $\eta_{i,0}=0,\ i\geq 0$, $\eta_{0,1}=1/\alpha_0$ and for $j=1,2,\ldots$
	\begin{equation}\label{eta} 
	 	\begin{cases}
		\eta_{i,j+1}=\frac{1}{\alpha_j}\big[\alpha_{i-1}\eta_{i-1,j} + (\beta_i-\beta_j)\eta_{i,j} \\ \hspace{0.24\textwidth} + \gamma_{i+1}\eta_{i+1,j} - \gamma_j\eta_{i,j-1}\big],\ i=0,\ldots,j-1\\
		\eta_{j,j+1}=\frac{1}{\alpha_j}\alpha_{j-1}\eta_{j-1,j}
		\end{cases},
	\end{equation}
	and  $\frac{d}{dx} {\cal P}={\cal P} N$.
	\begin{proof}
		From \eqref{ttrr} we have $P_0=1$ and 
		$ P_1 = \frac{1}{\alpha_0}(x-\beta_{0}) $
		and so $\eta_{i,0}=0,\ i\geq 0$ and $\eta_{0,1}=\frac{1}{\alpha_0}$. For the remain columns of $N$, applying the operator $\frac{d}{dx}$ to both sides of \eqref{ttrr} then
		\[ P_j + x P'_j = \alpha_j P'_{j+1} + \beta_j P'_{j} + \gamma_j P'_{j-1},\ j=0,1,\ldots  \]
		and, by definition of $\eta_{i,j}$
		\[ P_j + x \sum_{i=0}^{j-1}\eta_{i,j}P_i = \alpha_j \sum_{i=0}^{j}\eta_{i,j+1}P_i + \beta_j \sum_{i=0}^{j-1}\eta_{i,j}P_i + \gamma_j \sum_{i=0}^{j-2}\eta_{i,j-1}P_i,\ j=0,1,\ldots  \]
		and so
		\begin{eqnarray*}
		\alpha_j \sum_{i=0}^{j}\eta_{i,j+1}P_i  & = &  P_j + \sum_{i=0}^{j-1}\eta_{i,j} (\alpha_i P_{i+1} + \beta_i P_{i} + \gamma_i P_{i-1}) \\
		& & \qquad\qquad\qquad - \beta_j \sum_{i=0}^{j-1}\eta_{i,j} P_i - \gamma_j \sum_{i=0}^{j-2}\eta_{i,j-1}P_i 
		\end{eqnarray*}
		rearranging indices and identifying similar coefficients, \eqref{eta} is achieved. That $\frac{d}{dx} {\cal P}={\cal P} N$ is a consequence of Proposition \ref{prop:L}.
	\end{proof} 
\end{proposition}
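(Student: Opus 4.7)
The plan is to split the argument into base cases and an inductive step that arises from differentiating the three-term recurrence \eqref{ttrr} and reading off coefficients in the basis $\mathcal{P}$.

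For the base cases, $P_0 \equiv 1$ gives $P_0' = 0$ and hence $\eta_{i,0} = 0$ for every $i$. Setting $j=0$ in \eqref{ttrr} yields $xP_0 = \alpha_0 P_1 + \beta_0 P_0$, i.e.\ $P_1 = (x-\beta_0)/\alpha_0$, so $P_1' = 1/\alpha_0$ and $\eta_{0,1} = 1/\alpha_0$. The remaining entries of column $1$ vanish because $P_1'$ is a constant and $N$ is strictly upper triangular by construction.

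For the inductive step at $j \geq 1$, I would differentiate \eqref{ttrr} to obtain
\[
P_j + x P_j' = \alpha_j P_{j+1}' + \beta_j P_j' + \gamma_j P_{j-1}',
\]
substitute the expansions $P_k' = \sum_{i} \eta_{i,k} P_i$ for $k \in \{j-1, j, j+1\}$, and then apply \eqref{ttrr} once more to replace each $x P_i$ inside $x\sum_i \eta_{i,j} P_i$ by $\alpha_i P_{i+1} + \beta_i P_i + \gamma_i P_{i-1}$. After re-indexing the three resulting pieces with a single running index (so that the sum of shifts $i\mapsto i-1$, $i\mapsto i$, $i\mapsto i+1$ is collected on a common $P_i$), I would isolate $\alpha_j \sum_i \eta_{i,j+1} P_i$ on one side and match the coefficient of $P_i$ on both sides. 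For $i < j$ this produces the first line of \eqref{eta}; the diagonal entry $i = j$ follows from the same calculation using the boundary conventions $\eta_{j,j} = \eta_{j+1,j} = \eta_{j,j-1} = 0$, which collapse most terms. The operator identity $\frac{d}{dx}\mathcal{P} = \mathcal{P}N$ is then immediate from Proposition~\ref{prop:L} applied to $l = d/dx$.

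The main obstacle is the index bookkeeping. The expansion of $x\sum_i \eta_{i,j} P_i$ splits into three shifted sums whose ranges overlap differently, and one must fix a consistent convention for $\eta$ outside the strictly upper triangular region ($\eta_{-1,j} := 0$, and $\eta_{i,j} := 0$ for $i \geq j$) in order to write a single clean recurrence that remains valid up to and including the top diagonal. One also has to keep track of where the extra $P_j$ term on the left contributes when matching coefficients near $i=j$. Once these conventions are in place the coefficient matching is mechanical, and division by $\alpha_j$ is legal because $\alpha_j \neq 0$ (it is the multiplier of the leading term produced by the normalized recurrence).
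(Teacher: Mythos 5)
Your proposal is correct and follows essentially the same route as the paper: differentiate the three-term recurrence, expand the derivatives in the basis, re-expand $xP_i$ via the recurrence, and match coefficients of $P_i$, with the base cases handled from $P_0=1$ and $P_1=(x-\beta_0)/\alpha_0$. The boundary conventions you spell out ($\eta_{-1,j}=0$ and $\eta_{i,j}=0$ for $i\geq j$) are exactly what the paper's ``rearranging indices and identifying similar coefficients'' step implicitly relies on.
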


From that proposition we can derive explicit formulas for the first subdiagonals of $N$.

\begin{corollary}\label{propetaim1i}
	Under conditions of  { Proposition \ref{prop:dP}}, we have
	\begin{eqnarray}
	\eta_{j,j+1} & = & \frac{j+1}{\alpha_j},  \label{etasubdiag1}\\
	\eta_{j,j+2} & = & \frac{1}{\alpha_j\alpha_{j+1}} \sum_{i=0}^{j} ( \beta_i - \beta_{j+1} ), \label{etasubdiag2}\\
	\eta_{j,j+3} & = & \frac{1}{\alpha_j\alpha_{j+1}\alpha_{j+2}}\sum_{i=0}^{j} \left[ (\beta_i - \beta_{j+2})(\beta_i - \beta_{j+1}) + 2\alpha_i \gamma_{i+1} -\alpha_{j+1} \gamma_{j+2}\right],\label{etasubdiag3}
	\end{eqnarray}
	for $j\geq 0$.
	
	\begin{proof}
		For $j=0$, the result $\eta_{0,1}=\frac{1}{\alpha_0}$  is included in { Proposition \ref{prop:dP}}. Now, assuming that $\eta_{j-1,j}=\frac{j}{\alpha_{j-1}}$ and replacing in \eqref{eta} we recover \eqref{etasubdiag1} by induction.
		
		By definition of $\eta_{i,j}$ we can write 
		\[
		P'_2=\eta_{1,2}P_1+\eta_{0,2}
		\]
		and from \eqref{ttrr}
		\begin{eqnarray*}
			P'_2 & = & \frac{1}{\alpha_1}(P_1+x P'_1-\beta_{1}P'_1) \\
			& = & \frac{1}{\alpha_1}(P_1 + \frac{1}{\alpha_0}(\alpha_0 P_1 +\beta_0-\beta_{1}))
			= \frac{2}{\alpha_1}P_1+\frac{\beta_0-\beta_1}{\alpha_1\alpha_{0}}
		\end{eqnarray*}
		resulting in $\ \eta_{0,2} = \frac{\beta_0-\beta_1}{\alpha_0\alpha_{1}}\ $ and this confirms \eqref{etasubdiag2} for $j=0$.
		From \eqref{eta} we can write
		\[
		\eta_{j,j+2} = \frac{1}{\alpha_{j+1}}\left[ \alpha_{j-1}\eta_{j-1,j+1} + (\beta_{j}-\beta_{j+1}) \eta_{j,j+1} + \gamma_{j+1}\eta_{j+1,j+1} + \gamma_{j+1}\eta_{j,j} \right]
		\]
		and, from the fact that $\eta_{i,j}=0, i\geq j$ and from \eqref{etasubdiag1} we get 
		\[
		\eta_{j,j+2} = \frac{1}{\alpha_{j+1}}\left[ \alpha_{j-1}\eta_{j-1,j+1} + \frac{j+1}{\alpha_{j}} (\beta_{j}-\beta_{j+1}) \right]
		\]
		 and \eqref{etasubdiag2} follows by induction.
	\end{proof}
	
	The proof of \eqref{etasubdiag3} follows in a similar way, using \eqref{ttrr} with \eqref{etasubdiag1} and \eqref{etasubdiag2}.
\end{corollary}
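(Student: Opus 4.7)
The plan is to prove \eqref{etasubdiag1}, \eqref{etasubdiag2}, \eqref{etasubdiag3} in that order, each by induction on $j$, using the recurrence \eqref{eta} from Proposition \ref{prop:dP} together with the formulas already established for lower super-diagonals. For \eqref{etasubdiag1}, the base case $\eta_{0,1}=1/\alpha_0$ comes from Proposition \ref{prop:dP} and the diagonal case of \eqref{eta} gives the inductive step. For \eqref{etasubdiag2}, setting $i=j$ in \eqref{eta} with $j$ replaced by $j+1$ (so that $\eta_{j,j}=\eta_{j+1,j+1}=0$) reduces the step to the identity
\[ \sum_{i=0}^{j}(\beta_i-\beta_{j+1})=\sum_{i=0}^{j-1}(\beta_i-\beta_j)+(j+1)(\beta_j-\beta_{j+1}), \]
which follows at once from $\beta_i-\beta_{j+1}=(\beta_i-\beta_j)+(\beta_j-\beta_{j+1})$.

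The substantive work is \eqref{etasubdiag3}. Taking $i=j$ in \eqref{eta} with $j$ replaced by $j+2$ gives
\[ \alpha_{j+2}\eta_{j,j+3}=\alpha_{j-1}\eta_{j-1,j+2}+(\beta_j-\beta_{j+2})\eta_{j,j+2}+\gamma_{j+1}\eta_{j+1,j+2}-\gamma_{j+2}\eta_{j,j+1}. \]
Into this I would substitute $\eta_{j,j+1}$ and $\eta_{j+1,j+2}$ from \eqref{etasubdiag1}, $\eta_{j,j+2}$ from \eqref{etasubdiag2}, and $\eta_{j-1,j+2}$ from the inductive hypothesis for \eqref{etasubdiag3}, then multiply through by $\alpha_j\alpha_{j+1}$ to clear denominators. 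The base case $j=0$ is a direct computation from the known values of $\eta_{0,1}$, $\eta_{0,2}$ and $\eta_{1,2}$.

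The main obstacle is the algebraic bookkeeping in the inductive step, which splits into three pieces. The $\beta$-product terms combine via the identity
\[ \sum_{i=0}^{j}(\beta_i-\beta_{j+2})(\beta_i-\beta_{j+1})=\sum_{i=0}^{j-1}(\beta_i-\beta_{j+1})(\beta_i-\beta_j)+\sum_{i=0}^{j}(\beta_j-\beta_{j+2})(\beta_i-\beta_{j+1}), \]
obtained by writing $\beta_i-\beta_{j+2}=(\beta_i-\beta_j)+(\beta_j-\beta_{j+2})$ and noting that the $i=j$ summand of $\sum_{i=0}^{j}(\beta_i-\beta_j)(\beta_i-\beta_{j+1})$ vanishes. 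The $\alpha\gamma$-terms assemble because $(j+2)\alpha_j\gamma_{j+1}-j\alpha_j\gamma_{j+1}=2\alpha_j\gamma_{j+1}$, exactly the quantity needed to extend $\sum_{i=0}^{j-1}2\alpha_i\gamma_{i+1}$ to $\sum_{i=0}^{j}2\alpha_i\gamma_{i+1}$ (the $j$ copies of $-\alpha_j\gamma_{j+1}$ inside the sum arising from $\alpha_{j-1}\eta_{j-1,j+2}$ cancel against $j$ of the $(j+2)$ copies contributed by $\gamma_{j+1}\eta_{j+1,j+2}$). Finally the constant $-\alpha_{j+1}\gamma_{j+2}$ terms line up because $\gamma_{j+2}\eta_{j,j+1}$ contributes exactly $(j+1)\alpha_{j+1}\gamma_{j+2}$, matching the $(j+1)$ copies of $-\alpha_{j+1}\gamma_{j+2}$ in the target sum.
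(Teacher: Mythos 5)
Your proof is correct and follows essentially the same route as the paper: induction on $j$ for each superdiagonal using the recurrence \eqref{eta}, with the lower superdiagonal formulas fed into the step for the next one. The only difference is that you carry out in full the algebraic bookkeeping for \eqref{etasubdiag3} (and your identities for the $\beta$-products and the $\alpha\gamma$-terms do check out), whereas the paper dismisses that case with ``follows in a similar way.''
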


A special case arises when in \eqref{ttrr} $\beta_i=0,\ i=0,1,\ldots$. This is the so-called symmetric case, resulting in the property that polynomials $P_n$ are functions with the same parity of $n$ and so, their derivatives $P'_n$ have the parity of $n-1$. Using {Proposition \ref{prop:dP}} we have an alternative proof to an equivalent result.

\begin{corollary}\label{prop:beta=0}
	Let $\cal P$ be a basis satisfying \eqref{ttrr} with 	$\beta_i=0,\ i=0,1,\ldots$ and $\eta_{i,j}$ defined by { Proposition \ref{prop:dP}}, then $\eta_{j-2k,j}=0,\ k=1,2,\ldots\lfloor j/2\rfloor,\  j\geq 2
	$,
where $\lfloor x\rfloor$ is the nearest integer less or equal to $x$.	
	\begin{proof}
		{Corollary \ref{propetaim1i}} with $\beta_i=0,\ i=0,1,\ldots,j$ proofs the result for $k=1$. Now, admitting that $\eta_{j-2,j}=\eta_{j-4,j}=\cdots =\eta_{j-2k-2,j}=0$ and taking $\beta_i=0$ in \eqref{ttrr}
		\begin{eqnarray*}
			\eta_{j-2k,j} & = & \frac{1}{\alpha_{j-1}}(\alpha_{j-2k-1}\eta_{j-2k-1,j-1}+\gamma_{j-2k+1} \eta_{j-2k+1,j-1}-\gamma_{j-1} \eta_{j-2k,j-2}) \\
			& = & \frac{\alpha_{j-2k-1}}{\alpha_{j-1}}\eta_{j-2k-1,j-1}
		\end{eqnarray*}
		by hypothesis. Iterating the last equality we arrive at
		\[
		\eta_{j-2k,j} = \frac{\alpha_{j-2k-1}\alpha_{j-2k-2}\cdots\alpha_{1}}{\alpha_{j-1}\alpha_{j-2}\cdots\alpha_{2k+1}}\eta_{0,2k}
		\]
		but
		\[
		\eta_{0,2k} = \frac{1}{\alpha_{2k-1}}(\gamma_{1} \eta_{1,2k-1}-\gamma_{2k-1} \eta_{0,2k-2})=0
		\]
		also by the same induction hypotheses.
	\end{proof} 
\end{corollary}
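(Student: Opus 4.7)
The plan is to prove the equivalent statement that $\eta_{i,j}=0$ whenever $j-i$ is a positive even integer, by strong induction on $j$ rather than on $k$. The starting observation is that with $\beta_i=0$ the recurrence \eqref{eta} of Proposition \ref{prop:dP} loses its middle term, and (after reindexing $j+1\mapsto j$) becomes
\[
\eta_{i,j} \;=\; \frac{1}{\alpha_{j-1}}\bigl[\alpha_{i-1}\eta_{i-1,j-1} + \gamma_{i+1}\eta_{i+1,j-1} - \gamma_{j-1}\eta_{i,j-2}\bigr].
\]
This expresses every column-$j$ entry in terms of entries of columns $j-1$ and $j-2$, which makes induction on $j$ natural.

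The base case $j=2$ is immediate from Corollary \ref{propetaim1i} formula \eqref{etasubdiag2}, which, after setting $\beta_i=0$, gives $\eta_{0,2}=0$. For the inductive step I would fix $j\geq 3$ and $i<j$ with $j-i$ positive and even, and argue that the last two terms on the right-hand side vanish. The entry $\eta_{i+1,j-1}$ has $(j-1)-(i+1)=j-i-2$, non-negative and even, so it is zero by the inductive hypothesis applied to column $j-1$ (or, in the edge case $i=j-2$, equals $\eta_{j-1,j-1}=0$ by upper-triangularity of $N$). The entry $\eta_{i,j-2}$ has $(j-2)-i=j-i-2$ and vanishes for the same reason. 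What remains is
\[
\eta_{i,j} \;=\; \frac{\alpha_{i-1}}{\alpha_{j-1}}\,\eta_{i-1,j-1},
\]
with the convention that the surviving term is absent when $i=0$. But $(j-1)-(i-1)=j-i$ is positive and even, so the inductive hypothesis in column $j-1$ forces $\eta_{i-1,j-1}=0$, and therefore $\eta_{i,j}=0$, completing the induction.

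The main obstacle is pure bookkeeping: checking that the boundary cases $i=0$ (where $\alpha_{-1}\eta_{-1,j-1}$ would formally appear) and $i=j-2$ (where $\eta_{i+1,j-1}$ and $\eta_{i,j-2}$ sit on or above the main diagonal of $N$) are absorbed by the standard conventions $\eta_{-1,\cdot}=0$ and $\eta_{m,m}=0$ without separate treatment. Beyond that, the argument is parity-tracking on the three-term recurrence, and induction on $j$ is a slight reorganization of the author's iterated reduction $\eta_{j-2k,j}\mapsto\eta_{j-2k-1,j-1}\mapsto\cdots\mapsto\eta_{0,2k}$, sidestepping the need for a separate direct verification that $\eta_{0,2k}=0$.
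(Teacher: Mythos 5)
Your argument is correct, and it reorganizes the paper's proof in a way worth noting. Both proofs run on the same engine --- the recurrence \eqref{eta} with the middle term killed by $\beta_i=0$, plus parity bookkeeping on $j-i$ --- but the paper inducts on the subdiagonal index $k$: it first disposes of the two ``even-gap'' terms, then telescopes the surviving relation $\eta_{j-2k,j}=\frac{\alpha_{j-2k-1}}{\alpha_{j-1}}\eta_{j-2k-1,j-1}$ down the diagonal to $\eta_{0,2k}$, and finally checks separately that $\eta_{0,2k}=0$. Your strong induction on the column index $j$ kills all three terms of the recurrence in one stroke (two by the hypothesis on columns $j-1$ and $j-2$ with gap $j-i-2$, one by the hypothesis on column $j-1$ with gap $j-i$, or by the conventions $\eta_{-1,\cdot}=0$ and $\eta_{m,m}=0$ at the boundaries), so the diagonal iteration and the separate first-row computation become unnecessary. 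What you give up is the explicit product formula $\eta_{j-2k,j}=\frac{\alpha_{j-2k-1}\cdots\alpha_1}{\alpha_{j-1}\cdots\alpha_{2k+1}}\eta_{0,2k}$, which the paper obtains as a byproduct but does not use elsewhere; what you gain is a cleaner induction whose hypothesis is stated once and applied uniformly, with only the routine boundary cases $i=0$ and $i=j-2$ (and the column $j-2=1$ instance at $j=3$, absorbed by triangularity) to verify.
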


These results are also useful to derive the matrix representation of the primitive operator. 

\begin{proposition}\label{prop:IP}
	Let $\cal P$ be the basis satisfying \eqref{ttrr}. Defining  
	\[
	 O = [\theta_{ij}]_{i,j\geq 0},\quad \text{with}\quad \theta_{ij} = \frac{1}{||P_i||^2}<P_i, \int P_j >,
	\]
	then  for $j=1,2,\ldots$
	\begin{equation}\label{thetaij}
	\left\{\begin{array}{l}
	\theta_{j+1,j} = {\displaystyle \frac{\alpha_j}{j+1}} \\
	\theta_{i+1,j} = {\displaystyle -\frac{\alpha_i}{i+1} \sum_{k=i+2}^{j+1} \eta_{ik}\theta_{kj}}, i=j-1,\ldots,1,0
	\end{array}\right. 
	\end{equation}
	and  $\int {\cal P}={\cal P} O$.
	\begin{proof}
		By definition, considering that the primitive of $P_j$ is a polynomial of degree $j+1$ defined with an arbitrary constant term, we can write
		\[ \int P_j = \sum_{i=1}^{j+1}\theta_{ij}P_i,\ j=0,1,\ldots  \]
		Differentiating both sides and applying proposition \ref{prop:dP} we have
		\[
		P_j = \sum_{i=1}^{j+1}\theta_{ij}P'_i = \sum_{i=1}^{j+1}\theta_{ij}\sum_{k=0}^{i-1}\eta_{ki} P_k .
		\]
		Rearranging indices and identifying similar coefficients, 
		\[
		P_j = \sum_{i=0}^{j}\left[ \sum_{k=i+1}^{j+1} \eta_{ik} \theta_{kj}  \right] P_i .
		\]
		And so, for the coefficient of $P_j$,
		\[ 
		\eta_{j,j+1}\theta_{j+1,j} = 1
		\]
		and, for the coefficients of $P_i,\ i=0,\ldots,j-1$,
		\[
		\sum_{k=i+1}^{j+1}\eta_{ik}\theta_{kj} = 0
		\]
		The result is obtained solving for $\theta_{j+1,j}$ the first equation and for $\theta_{i+1,j}$ each one in the last set of equations.
		
	\end{proof} 
\end{proposition}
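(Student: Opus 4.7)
The plan is to exploit the fact that $\int P_j$ is a polynomial of degree $j+1$, so that differentiation inverts the primitive operator and reduces the problem to the matrix $N$ already constructed in Proposition \ref{prop:dP}. I would begin by writing
\begin{equation*}
\int P_j \;=\; \sum_{i=1}^{j+1} \theta_{ij}\, P_i,
\end{equation*}
absorbing the arbitrary constant of integration into $\theta_{0j}$; since the statement makes no claim about $\theta_{0j}$, this ambiguity is harmless.

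Next I would differentiate both sides term by term to obtain $P_j = \sum_{i=1}^{j+1} \theta_{ij}\, P'_i$, substitute the expansion $P'_i = \sum_{k=0}^{i-1} \eta_{ki} P_k$ supplied by Proposition \ref{prop:dP}, and interchange the two summations to arrive at
\begin{equation*}
P_j \;=\; \sum_{k=0}^{j}\Bigl(\sum_{i=k+1}^{j+1} \eta_{ki}\,\theta_{ij}\Bigr) P_k.
\end{equation*}
By linear independence of the orthogonal family (or, equivalently, by taking inner products against each $P_k$), equating coefficients on the two sides yields $\eta_{j,j+1}\theta_{j+1,j} = 1$ together with $\sum_{i=k+1}^{j+1} \eta_{ki}\theta_{ij} = 0$ for $k = 0,\ldots,j-1$.

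The top equation, combined with $\eta_{j,j+1} = (j+1)/\alpha_j$ from Corollary \ref{propetaim1i}, immediately gives the closed form $\theta_{j+1,j} = \alpha_j/(j+1)$. For each $k$ in the second family of equations I would isolate the $i=k+1$ summand, solve for $\theta_{k+1,j}$, and rename indices ($k \mapsto i$, $i \mapsto k$) to recover the stated recurrence; because it expresses $\theta_{i+1,j}$ in terms of entries with strictly larger first index, it must be unwound for $i = j-1, j-2, \ldots, 0$, exactly as in the statement. The matrix identity $\int \mathcal{P} = \mathcal{P} O$ is then an instance of Proposition \ref{prop:L}. The main obstacle is purely bookkeeping — keeping the double-sum ranges consistent through the swap and fixing the direction of the back-substitution — as there is no deeper conceptual difficulty once Propositions \ref{prop:L} and \ref{prop:dP} are available.
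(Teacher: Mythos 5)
Your proposal follows the paper's proof essentially verbatim: expand $\int P_j$ over $P_1,\ldots,P_{j+1}$, differentiate, substitute the expansion of $P'_i$ from Proposition \ref{prop:dP}, swap the sums, equate coefficients, and back-substitute using $\eta_{i,i+1}=(i+1)/\alpha_i$ from Corollary \ref{propetaim1i}. The argument is correct and complete; no differences worth noting.
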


That proposition includes explicit formulas for the non null su-diagonal of matrix $O$. Using \eqref{thetaij} and { Corollary \ref{propetaim1i}} we get explicit formulas for $O$ matrix main diagonal and for the first upper diagonal.

\begin{corollary}\label{prop:thetajj}
	Under conditions of { Proposition \ref{prop:dP}}, we have for $j\geq 1$
	\begin{eqnarray}
	\theta_{j,j} & = & \frac{1}{j+1}(\beta_j- \overline{\beta}_j ),\\
	\theta_{j,j+1} & = & \frac{1}{(j+2)\alpha_{j}}( \frac{j+2}{j+1} \overline{\beta}_{j} \overline{\beta}_{j+2} + \alpha_{j}\gamma_{j+1} - \sigma_{j} - 2\xi_{j}) 
	\end{eqnarray}
	where $\overline{\beta}_j=\frac{1}{j}\sum_{i=0}^{j-1}\beta_i$, $\sigma_j=\frac{1}{j}\sum_{i=0}^{j-1}\beta_i^2$ and $\xi_j=\frac{1}{j}\sum_{i=0}^{j-1}\alpha_i\gamma_{i+1}$.
\end{corollary}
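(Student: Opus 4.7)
The strategy is to specialize the recurrence \eqref{thetaij} of Proposition \ref{prop:IP} at $i=j-1$ and then substitute the explicit subdiagonal entries of $N$ supplied by Corollary \ref{propetaim1i}. In each case the sum over $k$ in \eqref{thetaij} collapses to only one or two nonzero terms, so the whole problem reduces to a finite algebraic simplification in the recurrence data $\{\alpha_i,\beta_i,\gamma_i\}$.

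For the diagonal formula, setting $i=j-1$ with column index $j$ in \eqref{thetaij} leaves a single term
\[
\theta_{j,j}=-\frac{\alpha_{j-1}}{j}\,\eta_{j-1,j+1}\,\theta_{j+1,j}.
\]
I would then insert $\theta_{j+1,j}=\alpha_j/(j+1)$ from the first line of \eqref{thetaij} and $\eta_{j-1,j+1}=\frac{1}{\alpha_{j-1}\alpha_j}\sum_{i=0}^{j-1}(\beta_i-\beta_j)$ from \eqref{etasubdiag2} under the shift $j\mapsto j-1$. The factors $\alpha_{j-1}$ and $\alpha_j$ cancel, the sum equals $j(\overline{\beta}_j-\beta_j)$, and one reads off $\theta_{j,j}=\tfrac{1}{j+1}(\beta_j-\overline{\beta}_j)$.

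For the super-diagonal formula, specializing \eqref{thetaij} at $i=j-1$ with column index $j+1$ yields the two-term expression
\[
\theta_{j,j+1}=-\frac{\alpha_{j-1}}{j}\bigl[\eta_{j-1,j+1}\,\theta_{j+1,j+1}+\eta_{j-1,j+2}\,\theta_{j+2,j+1}\bigr].
\]
All four ingredients are known: $\theta_{j+2,j+1}=\alpha_{j+1}/(j+2)$ from \eqref{thetaij}, $\theta_{j+1,j+1}$ from the diagonal case just proved, and $\eta_{j-1,j+1}$, $\eta_{j-1,j+2}$ from \eqref{etasubdiag2} and \eqref{etasubdiag3} after the shift $j\mapsto j-1$. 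Substituting everything produces an explicit algebraic expression that one must then reorganize.

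The main obstacle is exactly this last bookkeeping step. Expanding $(\beta_i-\beta_{j+1})(\beta_i-\beta_j)$ inside \eqref{etasubdiag3} gives $\beta_i^2-(\beta_j+\beta_{j+1})\beta_i+\beta_j\beta_{j+1}$; averaging over $i=0,\ldots,j-1$ delivers the $\sigma_j$ piece, the $2\alpha_i\gamma_{i+1}$ terms assemble into $2\xi_j$, and the $i$-independent $-\alpha_j\gamma_{j+1}$ from the last summand in \eqref{etasubdiag3} supplies the $+\alpha_j\gamma_{j+1}$ of the stated formula. What remains are mixed quadratic terms in the $\beta$'s; combined with the contribution from $\eta_{j-1,j+1}\theta_{j+1,j+1}$, they must be collapsed into the single product $\tfrac{j+2}{j+1}\overline{\beta}_j\overline{\beta}_{j+2}$ by repeated use of the one-step identities $(j+1)\overline{\beta}_{j+1}=j\overline{\beta}_j+\beta_j$ and $(j+2)\overline{\beta}_{j+2}=(j+1)\overline{\beta}_{j+1}+\beta_{j+1}$. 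That recognition of the averaged-$\beta$ product is the only non-mechanical part of the argument.
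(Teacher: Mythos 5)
Your strategy is exactly the one the paper intends (the paper prints no proof of this corollary, only the remark that it follows from \eqref{thetaij} and Corollary \ref{propetaim1i}), and your treatment of the diagonal entry is complete and correct: at $i=j-1$ the sum in \eqref{thetaij} collapses to the single term $k=j+1$, and substituting \eqref{etasubdiag2} (shifted) together with $\theta_{j+1,j}=\alpha_j/(j+1)$ gives $\theta_{j,j}=-\tfrac{1}{j(j+1)}\sum_{i=0}^{j-1}(\beta_i-\beta_j)=\tfrac{1}{j+1}(\beta_j-\overline{\beta}_j)$.

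The gap is in the final ``recognition'' step of the super-diagonal case, which you assert rather than carry out — and which in fact fails. Performing the substitution you describe and simplifying yields
\[
\theta_{j,j+1}=\frac{1}{(j+2)\alpha_j}\Bigl(\overline{\beta}_j\overline{\beta}_{j+1}+\beta_j(\overline{\beta}_j-\overline{\beta}_{j+1})+\alpha_j\gamma_{j+1}-\sigma_j-2\xi_j\Bigr),
\]
and since $(j+1)\overline{\beta}_{j+1}=j\overline{\beta}_j+\beta_j$ the quadratic part equals $\tfrac{1}{j+1}\bigl(j\overline{\beta}_j^2+2\overline{\beta}_j\beta_j-\beta_j^2\bigr)$, whereas the statement's term is $\tfrac{j+2}{j+1}\overline{\beta}_j\overline{\beta}_{j+2}=\tfrac{1}{j+1}\bigl(j\overline{\beta}_j^2+\overline{\beta}_j\beta_j+\overline{\beta}_j\beta_{j+1}\bigr)$. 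The discrepancy $\tfrac{1}{j+1}\bigl(\beta_j(\overline{\beta}_j-\beta_j)-\overline{\beta}_j\beta_{j+1}\bigr)$ vanishes in the symmetric case $\beta_i\equiv 0$ and for the Chebyshev data in Table \ref{tab:OPF}, but not in general: for Laguerre ($\alpha_j=-(j+1)$, $\beta_j=2j+1$, $\gamma_j=-j$) one has $\int L_{j+1}=L_{j+1}-L_{j+2}$, hence $\theta_{1,2}=0$, while the stated formula evaluates to $-11/12$ at $j=1$; the displayed formula above does give $0$. So the identity you defer to as ``the only non-mechanical part'' is false, the proposal cannot be completed to prove the statement as written, and the honest conclusion of your (otherwise sound) computation is the corrected expression above. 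You should either carry the algebra through and report the corrected formula, or verify it against a non-symmetric family before claiming the collapse to $\tfrac{j+2}{j+1}\overline{\beta}_j\overline{\beta}_{j+2}$.
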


Next section is devoted to achieve explicit formulas to particular cases of classical orthogonal polynomials.

\subsection{Explicit formulas for Classical Orthogonal Polynomials} \label{subsec:COP}

In this section we treat the particular cases of the classical orthogonal polynomial basis, associated to the names of Jacobi, Laguerre, Hermite and Bessel. For the first three cases we follow \cite{AbSteg} handbook for definitions and normalizations. For Bessel polynomials data we follow \cite{KF}.

\subsubsection{Jacobi Polynomials}
	
Jacobi Polynomials $P_j^{(\alpha,\beta)}$ can be defined by \eqref{ttrr} with
\begin{equation*}    
	\alpha_j = \frac{2(j+1)(j+\gamma+1)}{(2j+\gamma+1)(2j+\gamma+2)},\
	\beta_j = \frac{\beta^2-\alpha^2}{(2j+\gamma)(2j+\gamma+2)},\
	\gamma_j = \frac{2(j+\alpha)(j+\beta)}{(2j+\gamma)(2j+\gamma+1)}
\end{equation*}                          
where $\alpha, \beta>-1$ are parameters and $\gamma=\alpha+\beta$. Those coefficients result in the following explicit formulas for the first two main diagonal elements in matrices $N$ and $O$.

\begin{proposition}
Let ${\cal P}=[P_i^{(\alpha,\beta)}]_{i\geq 0}$ be the Jacobi polynomials, $N = [\eta_{i,j}]_{i,j\geq 0}$ the differentiation matrix defined in {Proposition \ref{prop:dP}} and $O = [\theta_{i,j}]_{i,j\geq 0}$ the integration matrix defined in {Proposition \ref{prop:IP}}, then 
\begin{equation}\label{Jacobi_subdiags}
\begin{array}{rr}
\begin{cases}
\eta_{j,j+1}={\displaystyle \frac{(2j+\gamma+1)(2j+\gamma+2)}{2(j+\gamma+1)}} \\
\theta_{j+1,j}={\displaystyle \frac{1}{\eta_{j,j+1}}}
\end{cases} &
\begin{cases}
\eta_{j-1,j+1}={\displaystyle \frac{(2j+\gamma)^2 -1}{2(j+\gamma)(j+\gamma+1)}}(\beta-\alpha) \\
\theta_{j,j}={\displaystyle \frac{2(\alpha-\beta)}{(2j+\gamma)(2j+\gamma+2)}}
\end{cases}\\
 j=0,1,\ldots &  j=1,2,\ldots
\end{array}
\end{equation}
with $\gamma=\alpha+\beta$
\begin{proof}
The first set of equalities is obtained by direct substitution of $\alpha_j$ in \eqref{etasubdiag1} and in the first equality of \eqref{thetaij}. For the second set, from definition $\beta_j$ and by partial fraction decomposition we have
\begin{equation*}
\beta_j = (\beta-\alpha)\frac{\gamma}{(2j+\gamma)(2j+\gamma+2)} =  (\beta-\alpha)(\frac{j+1}{2j+\gamma+2} - \frac{j}{2j+\gamma})
\end{equation*}
and so
\begin{equation*}
	\sum_{i=0}^{j-1} \beta_i = (\beta-\alpha)\sum_{i=0}^{j-1} (\frac{i+1}{2i+\gamma+2} - \frac{i}{2i+\gamma}) =  (\beta-\alpha)\frac{j}{2j+\gamma}
\end{equation*}
then
\[
\sum_{i=0}^{j-1} \beta_i -j\beta_j = (\beta-\alpha)(\frac{j}{2j+\gamma} - \frac{j(j+1)}{2j+\gamma+2} + \frac{j^2}{2j+\gamma} ) =  \frac{2j(j+1)(\beta-\alpha)}{(2j+\gamma)(2j+\gamma+2)}
\]
and using {Corollary \ref{propetaim1i}}
\[ \eta_{j-1,j+1}={\displaystyle \frac{(2j + \gamma-1)(2j + \gamma+1)}{2(j + \gamma)(j + \gamma+1)}}(\beta - \alpha),\ j=1,2,\ldots \]
and from {Proposition \ref{prop:thetajj}}, $\theta_{j,j}=\frac{-1}{j(j+1)}[\sum_{i=0}^{j-1} \beta_i -j\beta_j]$.
\end{proof} 
\end{proposition}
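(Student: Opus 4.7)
The plan is to reduce every identity to the general formulas established earlier—namely \eqref{etasubdiag1}, \eqref{etasubdiag2}, the first line of \eqref{thetaij}, and Corollary \ref{prop:thetajj}—and then specialize by substituting the explicit three-term coefficients $\alpha_j$ and $\beta_j$ of the Jacobi family.

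The left column is essentially direct substitution. Since $\eta_{j,j+1}=(j+1)/\alpha_j$ by \eqref{etasubdiag1}, inserting $\alpha_j=\frac{2(j+1)(j+\gamma+1)}{(2j+\gamma+1)(2j+\gamma+2)}$ and cancelling the factor $j+1$ yields the claimed form. The identity $\theta_{j+1,j}=1/\eta_{j,j+1}$ is then immediate from the first line of \eqref{thetaij} combined with the same \eqref{etasubdiag1}, since both sides equal $\alpha_j/(j+1)$.

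For the right column, the key observation is that both $\eta_{j-1,j+1}$ (via \eqref{etasubdiag2} with the shift $j\mapsto j-1$) and $\theta_{j,j}$ (via Corollary \ref{prop:thetajj}) depend on the Jacobi parameters only through the single quantity $S_j:=\sum_{i=0}^{j-1}\beta_i-j\beta_j$, so the whole task collapses to evaluating $S_j$ in closed form. Here the main trick is a telescoping partial-fraction decomposition: using $\beta^2-\alpha^2=(\beta-\alpha)\gamma$ one writes
\[
\beta_j=(\beta-\alpha)\left(\frac{j+1}{2j+\gamma+2}-\frac{j}{2j+\gamma}\right),
\]
so that $\sum_{i=0}^{j-1}\beta_i$ telescopes to $(\beta-\alpha)\,j/(2j+\gamma)$, and a short common-denominator computation produces $S_j=(\beta-\alpha)\cdot 2j(j+1)/[(2j+\gamma)(2j+\gamma+2)]$.

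With $S_j$ in hand the remaining identities follow by substitution. For $\theta_{j,j}=-S_j/[j(j+1)]$ the factors $j(j+1)$ cancel immediately, leaving $2(\alpha-\beta)/[(2j+\gamma)(2j+\gamma+2)]$. For $\eta_{j-1,j+1}=S_j/(\alpha_{j-1}\alpha_j)$ the denominator $\alpha_{j-1}\alpha_j$ carries four factors of shape $(2j+\gamma+k)$ and four of shape $(j+d)$; after cancellation against $S_j$ only $(2j+\gamma-1)(2j+\gamma+1)/[2(j+\gamma)(j+\gamma+1)]$ survives, which one rewrites via $(2j+\gamma)^2-1=(2j+\gamma-1)(2j+\gamma+1)$. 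The only genuine obstacle is spotting the telescoping decomposition of $\beta_j$; everything else is mechanical simplification.
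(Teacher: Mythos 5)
Your proposal is correct and follows essentially the same route as the paper: the left column by direct substitution of $\alpha_j$ into \eqref{etasubdiag1} and the first line of \eqref{thetaij}, and the right column by the telescoping partial-fraction decomposition of $\beta_j$ feeding the quantity $\sum_{i=0}^{j-1}\beta_i-j\beta_j$ into \eqref{etasubdiag2} (shifted) and Corollary \ref{prop:thetajj}. Isolating that quantity as a single object $S_j$ is a tidy presentational touch, but the underlying argument is identical.
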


From the Jacobi polynomials $J^{(\alpha,\beta)}$, a particular case arises when $\alpha=\beta$, the Gegenbauer polynomials.

\subsubsection{Gegenbauer Polynomials}

Gegenbauer Polynomials $C^{(\lambda)}$ are defined for parameter $\lambda>-\frac{1}{2},\ \lambda\neq 0$, by \eqref{ttrr} with
\begin{equation}\label{abgC}    
	\alpha_n = \frac{n+1}{2(n+\lambda)},\ 
	\beta_n = 0,\
	\gamma_n = \frac{n+2\lambda-1}{2(n+\lambda)}=1-\alpha_n
\end{equation}       
Those coefficients result in the following explicit formulas for the elements in matrices $N$ and $O$.

\begin{proposition}\label{prop:C}
	Let ${\cal P}=[C_i^{(\lambda)}]_{i\geq 0}$ be the Gegenbauer polynomials, $N = [\eta_{i,j}]_{i,j\geq 0}$ the differentiation matrix defined in {Proposition \ref{prop:dP}} and $O = [\theta_{i,j}]_{i,j\geq 0}$ the integration matrix defined in {Proposition \ref{prop:IP}}, then 
	\begin{equation}
	\begin{array}{rr}
	\begin{cases}
	\eta_{i,j}=(1-(-1)^{j-i})(\lambda+i),\ i<j \\
	\eta_{i,j}=0,\ i\geq j  
	\end{cases} &
	\begin{cases}
	\theta_{j\pm 1,j}={\displaystyle \frac{\pm 1}{2(\lambda+j)}} \\
	\theta_{i,j}=0,\ i\neq j\pm 1
	\end{cases}
	\end{array}, j\geq 0
	\end{equation}
	\begin{proof}
		Substituting \eqref{abgC} in \eqref{etasubdiag1} and in \eqref{etasubdiag2} results in $\eta_{j,j+1}=2(\lambda+j)$ and $\eta_{j-1,j+1}=0$ proving the result for $\eta_{j-1,j}$ and $\eta_{j-2,j}$. For $\eta_{j-2k,j},\ k=1,2,\ldots\lfloor j/2\rfloor,\  j\geq 1$ the result follows from {Proposition \eqref{prop:beta=0}}. Now the result is proved for $\eta_{i,j},\ i<j$ for columns $j\leq 2$. Admitting that this is true for columns $0,\ldots,j$ and introducing \eqref{abgC} in \eqref{ttrr} we have
		\begin{eqnarray*}
		\eta_{i,j+1} & = & \frac{\lambda+j}{j+1}\big[ \frac{i}{\lambda+i-1}\eta_{i-1,j} + \frac{2\lambda + i}{\lambda+i+1}\eta_{i+1,j} \big] -\frac{2\lambda+j-1}{j+1} \eta_{i,j-1} \\
		& = & \frac{\lambda+j}{j+1}\big[ \frac{i(1-(-1)^{j+1-i})}{\lambda+i-1}(\lambda+i-1) + \frac{(2\lambda + i)(1-(-1)^{j-1-i})}{\lambda+i+1}(\lambda+i+1) \big] \\
		& &  -\frac{2\lambda+j-1}{j+1} (1-(-1)^{j-1-i})(\lambda+i) \\
		& = & \frac{\lambda+j}{j+1}( 2\lambda + 2i) (1-(-1)^{j+1-i}) -\frac{2\lambda+j-1}{j+1} (1-(-1)^{j-1-i})(\lambda+i) \\
		& = & (1-(-1)^{j+1-i})(\lambda+i)
		\end{eqnarray*}    
	validating the formula for $\eta_{i,j+1}$ and, by induction over $j$, for the whole matrix $N$.
	  	
	For the elements of matrix $O$, from direct substitution of \eqref{abgC} in \eqref{thetaij} we get the formula to $\theta_{j+1,j}$ and
		\begin{eqnarray}\label{thetai+1j}
		\theta_{i+1,j} & = & \frac{-1}{2(\lambda+i)} \sum_{k=i+2}^{j+1} \eta_{i,k}\theta_{k,j} \nonumber \\
		& = & \frac{-1}{2(\lambda+i)} \sum_{k=i+2}^{j+1} (1-(-1)^{k-i})(\lambda+i)\theta_{k,j} = -\sum_{r=1}^{\lfloor \frac{j-i}{2}\rfloor} \theta_{i+1+2r,j} 
		\end{eqnarray}
	Since from {Proposition \ref{prop:thetajj}} we have $\theta_{j,j}=0$, we only have to iterate \eqref{thetai+1j} for $i=j-2,j-3,\ldots,1$. This results in
		\begin{eqnarray*}
		\theta_{j-1,j} & = & -\theta_{j+1,j} = -\frac{1}{2(\lambda+j)} \\
		\theta_{j-2,j} & = & -\theta_{j,j} = 0  \\
		\theta_{j-3,j} & = & -(\theta_{j-1,j} + \theta_{j+1,j}) = 0 \\
		\theta_{j-2k,j} & = & -\sum_{r=0}^{k-1} \theta_{j-2r,j} = 0
		\end{eqnarray*}
	\end{proof} 
\end{proposition}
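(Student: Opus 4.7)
The proposition has two halves: the formula for $N$ and the formula for $O$, and each follows from a specialization of earlier results once the Gegenbauer coefficients \eqref{abgC} are substituted.

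For the differentiation matrix, I would first exploit $\beta_n=0$: by Corollary \ref{prop:beta=0}, all entries $\eta_{j-2k,j}$ with $1\le k\le\lfloor j/2\rfloor$ vanish, so the formula $\eta_{i,j}=(1-(-1)^{j-i})(\lambda+i)$ is automatically correct on the even-parity subdiagonals. It remains to establish it on the odd-parity subdiagonals (where $j-i$ is odd). I would first verify the two base cases: $\eta_{j,j+1}=2(\lambda+j)$ is obtained by plugging $\alpha_j=\frac{j+1}{2(j+\lambda)}$ into \eqref{etasubdiag1}, and $\eta_{j-1,j+1}=0$ (consistent with the closed-form, since $j-(j-1)=1$ is forced odd but the base-recursion just gives zero due to $\beta=0$) is no trouble. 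Then I would induct on $j$: assuming the formula holds for columns $\le j$, substitute \eqref{abgC} and the inductive hypothesis into the recurrence \eqref{eta} (with $\beta_j=0$). The key algebraic simplification is that each of the $(\lambda+i\pm 1)$ denominators coming from $\alpha_{i-1}$ and $\gamma_{i+1}$ cancels exactly against the $(\lambda+i-1)$ and $(\lambda+i+1)$ factors from $\eta_{i-1,j}$ and $\eta_{i+1,j}$, so the whole right-hand side collapses to $(1-(-1)^{j+1-i})(\lambda+i)$. This cancellation is the main technical step and the only place where real computation is needed; the rest is bookkeeping.

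For the integration matrix, the subdiagonal $\theta_{j+1,j}=\frac{1}{2(\lambda+j)}$ is just the first line of \eqref{thetaij} with $\alpha_j$ substituted. The diagonal entry $\theta_{j,j}=0$ follows from Corollary \ref{prop:thetajj}: with $\beta_i=0$ we have $\overline{\beta}_j=0$ so $\theta_{j,j}=\frac{1}{j+1}(\beta_j-\overline{\beta}_j)=0$. For the remaining entries $\theta_{i+1,j}$ with $i<j-1$, I would feed the already-established $\eta_{i,k}=(1-(-1)^{k-i})(\lambda+i)$ into the second line of \eqref{thetaij}. The factor $\lambda+i$ cancels with the $\frac{1}{2(\lambda+i)}$ prefactor, and only the values $k=i+2,i+4,\ldots$ contribute (the ones with $k-i$ even yield $\eta_{i,k}=0$ and drop out), giving the telescoping identity $\theta_{i+1,j}=-\sum_{r\geq 1}\theta_{i+1+2r,j}$ shown in \eqref{thetai+1j}. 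Iterating downward from $i=j-2$ and using the already-known $\theta_{j-1,j}=-\theta_{j+1,j}=-\frac{1}{2(\lambda+j)}$, $\theta_{j-2,j}=-\theta_{j,j}=0$, each subsequent $\theta_{j-2k,j}$ and $\theta_{j-2k-1,j}$ becomes a signed sum of terms already shown to be zero (apart from a single non-zero pair that cancels).

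The obstacle I expect is the induction step for $N$: making sure the telescoping in the $\beta=0$ version of \eqref{eta}, after one cancels the $\alpha$'s and $\gamma$'s from \eqref{abgC}, genuinely produces a factor of $2(\lambda+i)$ multiplying the parity indicator and not some shifted version. Once that identity is nailed down, everything else is a routine cascade: the $O$-matrix formulas fall out as a corollary of the $N$-matrix formulas via \eqref{thetaij}.
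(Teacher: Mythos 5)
Your proposal follows the paper's own proof essentially step for step: the same base cases from \eqref{etasubdiag1}--\eqref{etasubdiag2}, the same appeal to Corollary \ref{prop:beta=0} for the even-offset entries of $N$, the same column-by-column induction through the recurrence \eqref{eta} with $\beta_j=0$, and for $O$ the same combination of the first line of \eqref{thetaij}, Corollary \ref{prop:thetajj} for $\theta_{j,j}=0$, and the downward telescoping iteration. The only blemish is the parenthetical claim that ``only the values $k=i+2,i+4,\ldots$ contribute'': it is the odd offsets $k=i+3,i+5,\ldots$ that survive (exactly as you then say when noting that even $k-i$ kills $\eta_{i,k}$), and your final identity $\theta_{i+1,j}=-\sum_{r\geq 1}\theta_{i+1+2r,j}$ is the correct one.
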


One immediate consequence of that proposition is the validation of the exact formulas
\begin{eqnarray*} 
	\frac{d}{dx} C_j^{(\lambda)} & = & \sum_{i=0}^{j-1} (1-(-1)^{j-i})(\lambda+i)C_i^{(\lambda)} = 2\sum_{r=0}^{\lfloor \frac{j-1}{2}\rfloor} (\lambda+j-1-2r)C_{j-1-2r}^{(\lambda)} \\
	\int C_j^{(\lambda)} & = & \frac{1}{2(\lambda+j)}(C_{j+1}^{(\lambda)} -C_{j-1}^{(\lambda)})
\end{eqnarray*}

Next we present a set of formulas obtained to particular cases of sequences of classical orthogonal polynomials. The four Chebyshev cases result from Jacobi polynomials $P_n^{(\alpha,\beta)}$, combining $\alpha=\pm\frac{1}{2}$ with $\beta=\pm\frac{1}{2}$; the Legendre case results from Gegenbauer polynomials $C_n^{(\lambda)}$ with $\lambda=\frac{1}{2}$; formulas for the three other cases, Laguerre, Hermite and Bessel polynomials, result from the general orthogonal polynomials formulas, with the data provided. The proofs are particular cases of propositions {Proposition \ref{prop:dP}} to {Proposition \ref{prop:C}}.

\subsubsection{Particular Classical Orthogonal Polynomials}

In {Table \ref{tab:OPF}} we present the coefficients of the recurrence relation \eqref{ttrr} for a set of particular cases of classical orthogonal polynomials families. 

\begin{table}[hbp]
	\centering\large
	\caption{Values $\alpha_j,\ \beta_j,\ j=0,1,\ldots$ and $\gamma_j,\ j=1,2,\ldots$ for \eqref{ttrr}. $\delta_j\equiv \delta_{j,0}$ is the Kronecker symbol.}
\begin{tabular}{l|l|c|c|c}\hline\hline
	$P_j$ & Name & $\alpha_j$ & $\beta_j$ & $\gamma_j$ \\ \hline
	$C_j^{(\lambda)}$ & Gegenbauer & $\frac{j+1}{2(j+\lambda)}$& $0$ & $\frac{j+2\lambda-1}{2(j+\lambda)}$ \\
	$T_j$ & Chebyshev $1^{st}$ kind & $2^{\delta_{j}-1}$ & $0$ & $2^{-1}$ \\
	$U_j$ & Chebyshev $2^{nd}$ kind & $2^{-1}$ & $0$ & $2^{-1}$ \\
	$V_j$ & Chebyshev $3^{th}$ kind & $2^{-1}$ & $1-2^{-\delta_{j}}$ & $2^{-1}$ \\
	$W_j$ & Chebyshev $4^{th}$ kind & $2^{-1}$ & $2^{-\delta_{j}}-1$ & $2^{-1}$ \\
	$P_j$ & Legendre & $\frac{j+1}{2j+1}$ & $0$ & $\frac{j}{2j+1}$ \\
	$L_j$ & Laguerre & $-(j+1)$ & $2j+1$ & $-j$ \\
	$H_j$ & Hermite & $2^{-1}$ & $0$ & $2j$ \\
	$y_j$ & Bessel & $\frac{1}{2j+1}$ & $-\delta_j$ & $-\frac{1}{2j+1}$ \\ \hline
\end{tabular}
	\label{tab:OPF}
\end{table}

For the first and the second kinds of Chebyshev polynomials and for Legendre, Laguerre and Hermite polynomials, we follow the normalisation proposed by Hochstrasser in  \cite{AbSteg}. The data for Chebyshev polynomials of  third and fourth kinds   are from \cite{DLMF} and Bessel polynomials are defined by \cite{KF}. In some of those cases, coefficients $\alpha_0$ and $\beta_0$ are redefined to result the same cited polynomial sequences, with the initial conditions $P_{-1}=0,\ P_0=1$ as in \eqref{ttrr}.

In {Table \ref{tab:eta_theta}} and {Table \ref{tab:dP_IntP}} we present explicit formulas for the coefficients $\eta_{i,j}$ and $\theta_{i,j}$ of the Fourier expansions of $P'_j$ and $\int P_j$, respectively. $P_j$ are one of the orthogonal polynomials presented in {Table \ref{tab:OPF}}. Since $N=[\eta_{i,j}]$ are upper triangular matrices with null diagonal and $O=[\theta_{i,j}]$ are tridiagonal matrices, we present formulas only for the eventually non null elements. Any $\eta_{i,j}$ and $\theta_{i,j}$ elements with $i=-1$ or with $j=-1$ must be considered as zero.

\begin{table}[hbp]
	\centering\large
	\caption{$i,j\in\mathbb{N}$, $\oldstylenums{1}_{i,j}=(-1)^{i+j}$, $\oldstylenums{2}_{i,j} = 1-\oldstylenums{1}_{i,j}$, $\delta_i$ is the Kronecker symbol and $\varsigma_i=\delta_i-1$. In all cases, $P'_{0}=0$, $\int P_{0} = \alpha_{0}P_1$ and $\int P_{1} = \frac{\alpha_1}{2}P_2 + \frac{1}{2}(\beta_1-\beta_0)P_1$.}
\begin{tabular}{l|l|l|l}\hline\hline
	$P_j$ & $\eta_{i,j},\ i\geq 1,\ j>i$ & $\theta_{i,j},\ j\geq 2$ & $i$ \\ \hline
	$C_j^{(\lambda)}$ & $\oldstylenums{2}_{i,j}(i+\lambda)$ & $\pm (2j+2\lambda)^{-1}$ & $j\pm 1$\\
	$T_j$ & $\oldstylenums{2}_{i,j}j/2^{\delta_{i}}$ & $\pm (2i)^{-1}$ & $j\pm 1$ \\
	$U_j$ &  $\oldstylenums{2}_{i,j}(i+1)$  & $\pm (2j+2)^{-1}$ & $j\pm 1$ \\
	$V_j$ &  $\oldstylenums{2}_{i,j}(i+\frac{1}{2})+j-i$  & $\frac{1}{2}  (j+1)^{\varsigma_{j-1-i}} (-j)^{\varsigma_{j+1-i}}$ & $j,\ j\pm 1$\\
	$W_j$ & $(\oldstylenums{2}_{i,j}(i+\frac{1}{2})+j-i)\oldstylenums{1}_{i,j+1}$  & $-\frac{1}{2} (-j-1)^{\varsigma_{j-1-i}} j^{\varsigma_{j+1-i}}$ & $j,\ j\pm 1$\\
	$P_j$ & $\oldstylenums{2}_{i,j}(i+\frac{1}{2})$ & $\pm (2j+1)^{-1}$ & $j\pm 1$\\
	$L_j$ & $-1$ & $(-1)^{j-i}$ & $j,\ j+1$\\
	$H_j$ & $2j\delta_{j-1-i}$ & $(2j+2)^{-1}$ & $j+1$\\
	$y_j$ & $(i-j)(i+j+1)(i+\frac{1}{2})\oldstylenums{1}_{i,j}$ & $ (j+1)^{\varsigma_{j-1-i}} (2j+1)^{\varsigma_{j-i}} j^{\varsigma_{j+1-i}}$ & $j,\ j\pm 1$\\ \hline
\end{tabular}
\label{tab:eta_theta}
\end{table}

With those values, we get the polynomials derivatives and primitives expressed in the same orthogonal basis.

\begin{table}[htbp]
	\centering\normalsize
	\caption{Primitives are considered with undetermined $P_0$ coefficient. In all cases, $P'_{0}=0$, $\int P_{0} = \alpha_{0}P_1$ and $\int P_{1} = \frac{\alpha_1}{2}P_2 + \frac{1}{2}(\beta_1-\beta_0)P_1$.}
\begin{tabular}{l|l|l}\hline\hline
	$P_j$ & $P'_{j},\ j=1,2,\ldots$ & $\int P_{j},\ j=2,3,\ldots$ \\ \hline
	$C_j^{(\lambda)}$ & $\displaystyle 2\sum_{k=0}^{\lfloor (j-1)/2 \rfloor}(\lambda+j-1-2k)C_{j-1-2k}^{(\lambda)}$ & $\displaystyle \frac{1}{2(\lambda+j)}(C_{j+1}^{(\lambda)}-C_{j-1}^{(\lambda)})$ \\ \hline
	$T_j$ & $\displaystyle \frac{(1-(-1)^j)j}{2} T_0+2j\sum_{k=1}^{\lfloor  j/2 \rfloor}T_{j+1-2k}$ & $\displaystyle \frac{1}{2(j+1)}T_{j+1} - \frac{1}{2(j-1)}T_{j-1}$ \\ \hline
	$U_j$ & $\displaystyle 2\sum_{k=0}^{\lfloor (j-1)/2 \rfloor}(j-2k)U_{j-1-2k}$ & $\displaystyle \frac{1}{2(j+1)}(U_{j+1}-U_{j-1})$ \\ \hline
	$V_j$ & $\displaystyle \sum_{k=1}^{j}\big[ \frac{1-(-1)^k}{2}(2j + 1) + (-1)^k k\big]V_{j-k}$ & $\displaystyle \frac{V_{j+1}}{2(j+1)}-\frac{V_{j}}{2j(j+1)} -\frac{V_{j-1}}{2j}$ \\ \hline
	$W_j$ & $\displaystyle \sum_{k=1}^{j}\big[ \frac{1-(-1)^k}{2}(2j + 1) - k\big]W_{j-k}$ & $\displaystyle \frac{W_{j+1}}{2(j+1)}+\frac{W_{j}}{2j(j+1)} -\frac{W_{j-1}}{2j}$ \\ \hline
	$P_j$ & $\displaystyle \sum_{k=0}^{\lfloor (j-1)/2 \rfloor}(2j-1-4k)P_{j-1-2k}$ & $\displaystyle \frac{1}{2j+1}(P_{j+1}-P_{j-1})$ \\ \hline
	$L_j$ & $\displaystyle -\sum_{i=0}^{j-1}L_{i}$ & $\displaystyle L_j-L_{j+1}$ \\ \hline
	$H_j$ & $\displaystyle 2jH_{j-1}$ & $\displaystyle \frac{1}{2(j+1)}H_{j+1}$ \\ \hline
	$y_j$ & $\displaystyle \frac{1}{2}\sum_{i=0}^{j-1}(2i+1)(-1)^{j+i}(i-j)(i+j+1)y_{i}$ & $\displaystyle \frac{1}{2j+1} [\frac{y_{j+1}}{j+1}\! +\! \frac{y_{j-1}}{j}]\! +\! \frac{y_{j}}{j(j+1)}$ \\ \hline
\end{tabular}
\label{tab:dP_IntP}
\end{table}

Those matrices, introduced in propositions \ref{prop:xP}, \ref{prop:dP} and \ref{prop:IP}, can be interpreted as representing the action of multiplication by $x$, differentiating and integrating the elements of an orthogonal polynomial basis $\mathcal{P}$. In that sense, because they translate into algebraic terms those analytical operations, they are called Operational matrices. 

\section{Integro Differential operators}

Combined the operational matrices introduced in the previous section, we can translate in algebraic terms the action of a linear integro-differential operator over the coefficients of a formal Fourier  series.

\begin{proposition}
	If $y=\mathcal{P}a,\ a=[a_0,a_1,\ldots]^T$ is a formal Fourier series in the orthogonal polynomial basis $\mathcal{P}=(P_0,P_1,\ldots)$ satisfying  \eqref{PiPj} then
	\begin{enumerate}
		\item for $j\in\mathbb{N},\ x^j y = \mathcal{P}M^j a$;
		\item for $j\in\mathbb{N},\ \dfrac{d^j y}{dx^j} = \mathcal{P}N^j a$;
		\item for $p\in\mathbb{P},\ p(x) y = \mathcal{P}p(M) a$;
		\item for $j\in\mathbb{N}$ and $p\in\mathbb{P},\ p(x) \dfrac{d^j y}{dx^j} = \mathcal{P}p(M)N^j a$.
	\end{enumerate}
\end{proposition}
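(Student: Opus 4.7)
The plan is to prove items 1 and 2 by induction on $j$, using respectively Proposition \ref{prop:xP} (which gives $x\mathcal{P}=\mathcal{P}M$) and Proposition \ref{prop:dP} (which gives $\frac{d}{dx}\mathcal{P}=\mathcal{P}N$), and then derive items 3 and 4 from 1 and 2 by linearity. Throughout, all matrix products and the symbol $\mathcal{P}L$ are interpreted in the element-wise sense of Proposition \ref{prop:L}, so the formal series manipulations are justified column-by-column on the unit vectors $e_j$ introduced in that proof.

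For item 1, I would first check the base case $j=0$ trivially (with $M^0=I$) and the case $j=1$: since $y=\mathcal{P}a$, we have $xy=x(\mathcal{P}a)=(x\mathcal{P})a=\mathcal{P}Ma$ by Proposition \ref{prop:xP}. The inductive step is
\[
x^{j+1}y=x(x^{j}y)=x(\mathcal{P}M^{j}a)=(x\mathcal{P})M^{j}a=\mathcal{P}M\cdot M^{j}a=\mathcal{P}M^{j+1}a.
\]
Item 2 follows by exactly the same pattern, replacing multiplication by $x$ with $\frac{d}{dx}$ and $M$ with $N$, relying on Proposition \ref{prop:dP} at each step.

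For item 3, write $p(x)=\sum_{k=0}^{n}c_{k}x^{k}$ with $n=\deg p$. By linearity of the operator $p\mapsto p(x)\,y$ and item 1,
\[
p(x)y=\sum_{k=0}^{n}c_{k}x^{k}y=\sum_{k=0}^{n}c_{k}\mathcal{P}M^{k}a=\mathcal{P}\Bigl(\sum_{k=0}^{n}c_{k}M^{k}\Bigr)a=\mathcal{P}p(M)a,
\]
where the last equality is the definition of $p(M)$. Item 4 is then obtained by applying item 3 to the function $\frac{d^{j}y}{dx^{j}}=\mathcal{P}N^{j}a$ in place of $y$: since $N^{j}a$ is again a coefficient vector of a formal series in $\mathcal{P}$, item 3 gives $p(x)\frac{d^{j}y}{dx^{j}}=\mathcal{P}p(M)(N^{j}a)=\mathcal{P}p(M)N^{j}a$.

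The only genuine subtlety is that $y$ is a formal series rather than a function, so equalities must be read coefficient-wise; but this is exactly what Proposition \ref{prop:L} established, and the only operations used in the induction are term-by-term applications of $x\mathcal{P}=\mathcal{P}M$ and $\frac{d}{dx}\mathcal{P}=\mathcal{P}N$, which make sense on each column $e_{j}$. Thus no convergence issue arises and the proof reduces to the two short inductions plus a linearity argument.
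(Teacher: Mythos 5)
Your proposal is correct and follows essentially the same route as the paper: induction on $j$ via $x\mathcal{P}=\mathcal{P}M$ and $\frac{d}{dx}\mathcal{P}=\mathcal{P}N$ for items 1 and 2, then linearity for item 3 and the combination of 2 and 3 for item 4. You simply spell out the inductive step and the element-wise interpretation more explicitly than the paper does.
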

\begin{proof}
	Since $y=\mathcal{P}a$ then, using Proposition \ref{prop:xP}, $xy = x\mathcal{P}a = \mathcal{P} M a$. For $j>1$ the proof of sentence {\it 1.} results by induction over $j$.
	For sentence {\it 2.} the proof is the same, substituting the functional $l=x$ by $l=\dfrac{d}{dx}$ and using Proposition \ref{prop:dP}. In {\it 3.}, since $p\in\mathbb{P}$ we can write, for some $n\in\mathbb{N}$, $p=\sum_{i=0}^{n} p_i x^i$ and the result follows by linearity, and by {\it 1.} with $p(M)=\sum_{i=0}^{n} p_i M^i$ and {\it 4.} is a combination of {\it 2.} and {\it 3.}.
\end{proof}

Now, the action of a linear differential operator with polynomial coefficients over a formal Fourier series can be represented by an algebraic operation over the coefficients vector.

\begin{corollary}\label{prop:D}
	If $y=\mathcal{P}a,\ a=[a_0,a_1,\ldots]^T$ is a formal Fourier series in the orthogonal polynomial basis $\mathcal{P}=(P_0,P_1,\ldots)$ satisfying  \eqref{PiPj} and 
	\begin{equation}\label{Ddef}
	D = \sum_{i=0}^\nu p_i  \frac{d^i}{dx^i},\ p_i\in \mathbb{P}_{n_i} ,
	\end{equation}
	then $Dy = \mathcal{P}\Pi a$ where
	\begin{equation*}
	\Pi = \sum_{i=0}^\nu p_i(M)  N^i
	\end{equation*}
\end{corollary}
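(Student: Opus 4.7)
The plan is to reduce the statement directly to item \textit{4.} of the preceding proposition, combined with the linearity of the operator $D$ and of the map $a \mapsto \mathcal{P}a$.

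First, I would expand $Dy$ using the definition \eqref{Ddef}:
\[ Dy = \sum_{i=0}^\nu p_i(x)\,\frac{d^i y}{dx^i}. \]
Because each $p_i$ lies in $\mathbb{P}_{n_i}$ and each $\frac{d^i}{dx^i}$ is a linear operator, the finite sum commutes with the action $y=\mathcal{P}a$. So applying item \textit{4.} of the preceding proposition termwise yields
\[ p_i(x)\,\frac{d^i y}{dx^i} = \mathcal{P}\,p_i(M)\,N^i\,a, \qquad i=0,1,\ldots,\nu. \]

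Next I would collect the $\nu+1$ terms. Since matrix multiplication on the right of $\mathcal{P}$ and vector summation are linear, I can factor:
\[ Dy = \sum_{i=0}^\nu \mathcal{P}\,p_i(M)\,N^i\,a = \mathcal{P}\left(\sum_{i=0}^\nu p_i(M)\,N^i\right)a = \mathcal{P}\,\Pi\,a, \]
which is precisely the conclusion.

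There is no real obstacle here; the result is essentially a bookkeeping corollary of the preceding proposition. The only subtlety worth noting is that everything is understood in the formal (element-wise) sense, as emphasized in Proposition \ref{prop:L}: the infinite matrix products $p_i(M)N^i$ are well defined because $N$ is upper triangular (columns have finitely many nonzero entries) and $M$ is tridiagonal, so applying $p_i(M)N^i$ to the column vector $a$ produces, for each row, a finite linear combination of the $a_k$. Hence the rearrangement of sums involved in passing from $\sum_i \mathcal{P}\,p_i(M)N^i a$ to $\mathcal{P}\bigl(\sum_i p_i(M)N^i\bigr)a$ is legitimate, and the proof is complete.
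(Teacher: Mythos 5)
Your argument is correct and is exactly the route the paper intends: the corollary is stated without an explicit proof precisely because it follows termwise from item \emph{4.} of the preceding proposition together with linearity, which is what you carry out. Your added remark on why the formal rearrangement is legitimate (column-finiteness of $N$ and bandedness of $M$) is a sensible bonus but not something the paper spells out.
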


To extend this operational representation for integro-differential operators, we have to apply Proposition \ref{prop:IP} to the case of definite integrals. 

\begin{proposition}\label{prop:IaxP}
	Let $\cal P$ be the basis satisfying \eqref{ttrr}, $y=\mathcal{P}a$ a formal Fourier series and $O$ the matrix introduced in Proposition \ref{prop:IP}, defining  
	\[
	O_a^x = [\vartheta_{ij}]_{i,j\geq 0},\quad 
	\left\{\begin{array}{l}
	\vartheta_{0j} = {\displaystyle -\sum_{i=1}^{j+1}\theta_{ij}P_i(a) } \\
	\vartheta_{ij} = \theta_{ij},\ i>0
	\end{array}\right. 
	\]
	then $\int_{a}^{x}y = \mathcal{P} O_a^x a$
	\begin{proof}
		Defining $F_j\equiv \int P_j = \sum_{i=1}^{j+1}\theta_{ij}P_i$, the primitive with undefined $P_0$ coefficient, introduced in Proposition \ref{prop:IP}, then 
		\begin{eqnarray*}
			 \int_{a}^{x} P_j(t)dt & = & F_j(x) - F_j(a) \\
			 & = &  \sum_{i=1}^{j+1}\theta_{ij}P_i(x) - \sum_{i=1}^{j+1}\theta_{ij}P_i(a) = \mathcal{P}O_a^x e_j
		\end{eqnarray*}
		meaning that $\int_{a}^{x}\mathcal{P} = \mathcal{P} O_a^x $ in element wise sense. The proof follows by linearity. 
	\end{proof} 
\end{proposition}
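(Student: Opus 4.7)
The plan is to reduce the definite integral $\int_a^x P_j(t)\,dt$ to the indefinite primitive $F_j$ already handled in Proposition \ref{prop:IP}, using the fundamental theorem of calculus, and then to interpret the resulting additive constant as the $P_0$-coefficient that was left undetermined in that proposition.

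Concretely, first I would fix the convention for Proposition \ref{prop:IP}: the primitive there is $F_j:=\sum_{i=1}^{j+1}\theta_{ij}P_i$, i.e.\ the entry $\theta_{0j}$ is free and effectively taken to be $0$. The definite integral is the unique primitive vanishing at $x=a$, so
\[
\int_a^x P_j(t)\,dt = F_j(x)-F_j(a) = -F_j(a)\cdot P_0(x) + \sum_{i=1}^{j+1}\theta_{ij}P_i(x).
\]
Since $-F_j(a)=-\sum_{i=1}^{j+1}\theta_{ij}P_i(a)=\vartheta_{0j}$ and $\theta_{ij}=\vartheta_{ij}$ for $i>0$, the right-hand side is exactly $\sum_{i=0}^{j+1}\vartheta_{ij}P_i(x) = \mathcal{P}(x)\,O_a^x e_j$. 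So columnwise we have $\int_a^x \mathcal{P}(t)\,dt = \mathcal{P}(x)\,O_a^x$ in the element-wise sense.

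Finally I would extend to a general formal series $y=\mathcal{P}a$ by linearity of the integral: since $\int_a^x \mathcal{P}(t)\,dt\cdot e_j = \mathcal{P}(x)\,O_a^x e_j$ for every $j$, linearity gives $\int_a^x y(t)\,dt = \sum_j a_j \int_a^x P_j(t)\,dt = \mathcal{P}(x)\,O_a^x a$ as claimed.

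I do not expect any real obstacle here: the proposition is essentially bookkeeping on top of Proposition \ref{prop:IP}. The only conceptual point that needs care is to make explicit that the $P_0$-coefficient of the primitive $F_j$ was left free in Proposition \ref{prop:IP}, so that overwriting the $i=0$ row with $-F_j(a)$ is legitimate and produces the unique antiderivative that vanishes at $a$; once this is acknowledged, the proof reduces to a one-line application of the fundamental theorem of calculus followed by linearity.
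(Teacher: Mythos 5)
Your proposal is correct and follows essentially the same route as the paper: it defines the primitive $F_j=\sum_{i=1}^{j+1}\theta_{ij}P_i$ from Proposition \ref{prop:IP}, writes $\int_a^x P_j(t)\,dt = F_j(x)-F_j(a)$, identifies the constant $-F_j(a)$ with the $\vartheta_{0j}$ entry (using $P_0=1$), and concludes by linearity. The only difference is that you make the role of the free $P_0$-coefficient slightly more explicit, which is a minor presentational improvement rather than a different argument.
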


In {Table \ref{tab:vartheta}} we present explicit formulas for the coefficients $\vartheta_{0j}$ of {Proposition \ref{prop:IaxP}} for the classical orthogonal polynomials defined in compact or in semi compact orthogonality intervals. In that table, we consider $\vartheta_{0j}$ as the coefficient of $P_0$ in $\int_a^x P_j(t)dt$ when integration limit $a$ is the same integration limit defining the orthogonality relation \eqref{PiPj}.

\begin{table}[hbp]
	\centering\large
	\caption{Coefficients $\vartheta_{0j}=\frac{1}{||P_0||^2}< P_0,\int_{a}^{x} P_j(t)dt >$.}
	\begin{tabular}{l|r|c|c|c}\hline\hline
		$P_j$ & $a$ & $\vartheta_{00}$ & $\vartheta_{01}$ & $\vartheta_{0j},\ j\geq 2$ \\ \hline
		$C_j^{(\lambda)}$ & $-1$ & $1$ & $-\frac{\lambda(2\lambda+1)}{2\lambda+2}$ & $\frac{(-1)^j}{j+1}\binom{j+2\lambda-2}{j}$\\
		$T_j$ & $-1$ & $1$ & $-1/4$ & $\frac{(-1)^{j+1}}{j^2-1}$ \\
		$U_j$ & $-1$ & $1$ & $-3/4$ & $\frac{(-1)^{j}}{j+1}$ \\
		$V_j$ & $-1$ & $1/2$ & $0$ & $\frac{(2j+1)(-1)^{j+1}}{j(j+1)}$ \\
		$W_j$ & $-1$ & $-1/2$ & $0$ & $\frac{(-1)^{j}}{j(j+1)}$ \\
		$P_j$ & $-1$ & $0$ & $1/6$ & $0$\\
		$L_j$ & $0$ & $1$ & $0$ & $0$\\ \hline
	\end{tabular}
	\label{tab:vartheta}
\end{table}

With repeated use of {Proposition} \ref{prop:IaxP} an equivalent result of {Proposition} \ref{prop:D} arrives to the case of integral operators.

\begin{proposition}
	If $y=\mathcal{P}a,\ a=[a_0,a_1,\ldots]^T$ is a formal Fourier series in the orthogonal polynomial basis $\mathcal{P}=(P_0,P_1,\ldots)$ satisfying  \eqref{PiPj} and 
	\begin{equation*}
	S = \sum_{i=1}^\nu p_i  I^i,\ p_i\in \mathbb{P}_{n_i},\quad I^iy = \int_{a_{ii}}^{x} \int_{a_{i,i-1}}^{x_{i-1}} \cdots  \int_{a_{i,1}}^{x_{1}} y(x_0) dx_0 dx_1 \cdots dx_{i-1}	
	\end{equation*}
	then $Sy = \mathcal{P}\varSigma a$ where
	\begin{equation*}
	\varSigma = \sum_{i=1}^\nu p_i(M) \varTheta_i,\quad \varTheta_i = O_{a_{ii}}^{x} O_{a_{i,i-1}}^{x} \cdots O_{a_{i,1}}^{x}
	\end{equation*}
\end{proposition}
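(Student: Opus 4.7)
The plan is to reduce the statement to the single-integral result of Proposition \ref{prop:IaxP} by induction on the depth of nested integration, and then handle multiplication by the polynomial coefficients $p_i$ via the $p\mapsto p(M)$ rule already established. Since $S$ is a finite linear combination, linearity of both $S$ and the map $a\mapsto\mathcal{P}a$ reduces the task to proving that, for each $i$, the single term $p_i(x)\,I^i y$ is represented by $\mathcal{P}\,p_i(M)\,\varTheta_i\,a$.

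The core step is an auxiliary claim, proved by induction on $k$: for any formal series $z=\mathcal{P}c$ and any choice of lower limits $a_{1},\ldots,a_{k}$,
\[
\int_{a_{k}}^{x}\int_{a_{k-1}}^{x_{k-1}}\!\!\cdots\int_{a_{1}}^{x_{1}} z(x_{0})\,dx_{0}\,dx_{1}\cdots dx_{k-1} \;=\; \mathcal{P}\,O_{a_{k}}^{x}O_{a_{k-1}}^{x}\cdots O_{a_{1}}^{x}\,c .
\]
The base case $k=1$ is exactly Proposition \ref{prop:IaxP}. For the inductive step, the innermost $k-1$ integrations produce, by hypothesis, a formal Fourier series whose coefficient vector is $O_{a_{k-1}}^{x}\cdots O_{a_{1}}^{x}\,c$; applying Proposition \ref{prop:IaxP} once more with lower limit $a_{k}$ prepends the factor $O_{a_{k}}^{x}$, yielding the claimed product. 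Setting $z=y$, $c=a$, $a_{\ell}=a_{i,\ell}$ and $k=i$ gives $I^{i}y=\mathcal{P}\,\varTheta_{i}\,a$. Since $\varTheta_{i}a$ is then itself a coefficient vector, sentence 3 of Corollary \ref{prop:D} supplies $p_{i}(x)\,I^{i}y=\mathcal{P}\,p_{i}(M)\,\varTheta_{i}\,a$, and summing over $i=1,\ldots,\nu$ produces $Sy=\mathcal{P}\,\varSigma\,a$.

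The main obstacle is not computational but conceptual: one must check that each intermediate integrand, after $k-1$ nested integrations, is genuinely a formal Fourier series in $\mathcal{P}$ so that Proposition \ref{prop:IaxP} can be reapplied. This is precisely what the definition of $O_{a}^{x}$ in Proposition \ref{prop:IaxP} guarantees — the free $P_{0}$ coefficient is fixed by the choice of lower limit so that $\int_{a}^{x}\mathcal{P}=\mathcal{P}\,O_{a}^{x}$ holds in the element-wise sense — so the induction goes through without any additional convergence or regularity hypothesis.
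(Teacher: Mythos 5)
Your proof is correct and follows essentially the same route the paper indicates: the paper offers no detailed argument beyond the remark that the result follows ``with repeated use of Proposition~\ref{prop:IaxP},'' and your induction on the nesting depth, combined with the $p\mapsto p(M)$ rule and linearity, is exactly that argument carried out in full (with the matrix factors correctly accumulating on the left).
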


\section{Numerical tests}

\subsection{Differential equations}

In order to test the robustness of both explicit and recurrence formulas presented in section \ref{Oper_on_Polyn}, we build matrices $M$ and $N$ associated to classical orthogonal polynomials and we test the effectiveness of their differential properties. Following \cite{AbSteg}, orthogonal polynomials satisfies differential equations of the type
\begin{equation} \label{DifEq}
	 g_2(x)P_j''+g_1(x)P_j'+a_j P_j=0,\ j=0,1,\ldots 
\end{equation}
where $g_1$ and $g_2$ are algebraic polynomials depending only on $x$ and $a_j$ are constants depending only on $j$. And so, for exact matrices $M$ and $N$, we must have
\begin{equation} \label{T} 
T\equiv g_2(M)N^2 + g_1(M)N + D = 0 
\end{equation}
where $D$ is the diagonal matrix $D=diag(a_0,a_1,\ldots)$ and $0$ the double infinite null matrix.
Table \ref{tab:DifEq} shows the data for equation (\ref{DifEq}), as in property 22.6 of \cite{AbSteg}. 

\begin{table}[hbp]
	\centering\large
	\caption{Coefficients $g_{1}$, $g_{2}$ and $a_n$ in (\ref{DifEq}).}
	\begin{tabular}{l|c|c|c}\hline\hline
		$P_j$ & $g_{2}(x)$ & $g_{1}(x)$ & $a_{n}$ \\ \hline
		$P_j^{(\alpha,\beta)}$ & $1-x^2$ & $\beta-\alpha-(\alpha+\beta+2)x$ & $n(n+\alpha+\beta+1)$  \\
		$C_j^{(\lambda)}$ & $1-x^2$ & $-(2\lambda+1)x$ & $n(n+2\lambda)$ \\
		$T_j$ & $1-x^2$ & $-x$ & $n^2$ \\
		$U_j$ & $1-x^2$ & $-3x$ & $n(n+2)$ \\
		$P_j$ & $1-x^2$ & $-2x$ & $n(n+1)$\\
		$L_j$ & $x$ & $1-x$ & $n$\\
		$H_j$ & $1$ & $-2x$ & $2n$\\ \hline
	\end{tabular}
	\label{tab:DifEq}
\end{table}

We test, for some families of orthogonal polynomials $P_j$ if the corresponding matrix $T_{n\times n}$ in (\ref{T}) is the exact null matrix evaluating $\max_{i,j}|T_{i j}|$, for some values of $n$. In Figure \ref{fig:Dif_Eqns_test} we present the results obtained with matrix dimensions starting with $n=20$ and stepping by $20$ to $n=1000$. We present values for Jacobi $P^{(\alpha,\beta)}$, Gegenbauer $C^{(\lambda)}$ and Legendre $P$ polynomials. For other cases, we have arrived in exact matrices.

\begin{figure}
	\includegraphics[width=\textwidth]{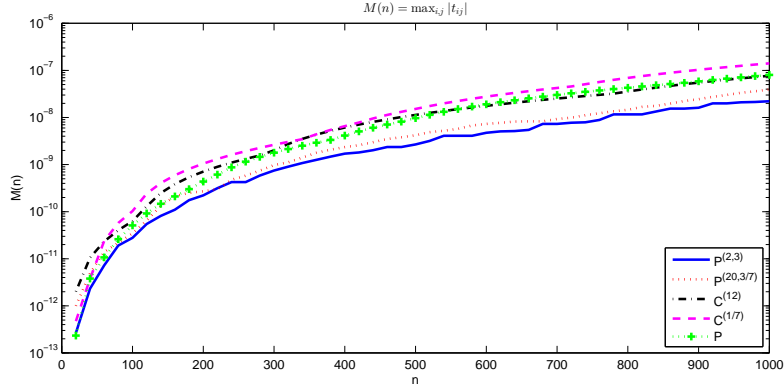}
	\caption{Error propagation $M(n)=\max_{ij}|t_{ij}|$ in $T=(t_{ij})_{n\times n}$, the matrix (\ref{T}) truncated to $n\times n$ dimension, for Jacobi $P^{(2,3)}$ and $P^{(20,3/7)}$, Gegenbauer $C^{(12)}$ and $C^{(1/7)}$ and for Legendre $P$ polinomials.}
	\label{fig:Dif_Eqns_test} 
\end{figure}

From those numerical experiences we can observe that for small values of $n$, the error propagation in the elements of matrices $T=g_2(M)N^2 + g_1(M)N + D$ evaluated in double precision arithmetic is absent or meaningless. For increasing values of $n$ we observe an increasing effect of the error propagation, in several cases of Jacobi polynomials, including Gegenbauer and Legendre particular cases. In those cases, the numerical behaviour of the recursive formulas (\ref{eta}) combined with explicit formulas (\ref{Jacobi_subdiags}), applied with Jacobi data have similar effect of using explicit data from {T}able \ref{tab:eta_theta}.

\subsection{Integral evaluation}

To test the formulas presented in {T}able \ref{tab:dP_IntP} we use properties 22.13 from \cite{AbSteg}. For Legendre polynomials case, and for each $n\in\mathbb{N}_0$ we must have
\[ I_{2n} = \int_{0}^{1} x^k P_{2n}(x)dx = \frac{(-1)^n\Gamma(\frac{1+k}{2})}{2\Gamma(-\frac{k}{2})} \frac{\Gamma(\frac{2n-k}{2})}{\Gamma(\frac{2n+3+k}{2})},\ k>-1 \] 
and
\[ I_{2n+1} = \int_{0}^{1} x^k P_{2n+1}(x)dx = \frac{(-1)^n\Gamma(\frac{2+k}{2})}{2\Gamma(\frac{2-k}{2})} \frac{\Gamma(\frac{2n+1-k}{2})}{\Gamma(\frac{2n+4+k}{2})},\ k>-2 \] 

Defining $P(x)=[P_0(x), P_1(x), \ldots,P_{n-1}(x)]$ we test, for several values of $k$ if the vector $T_n=(P(1)-P(0))M^k O$ retrieves the same exact values of $I_n=[I_0, I_1,\ldots,I_{n-1}]$, with truncated $n\times n$ matrices $M$ and $O$. We remark that, in Legendre case, we have exact values for $P_k(1)=1,\ k=0,1,\ldots,n$ and a recursive formula $P_{k+1}(0)=-\frac{k}{k+1}P_{k-1}(0),\ k=1,2,\ldots,n$ with $P_0(0)=1$ and $P_1(0)=0$ for values in the other extreme.

Figure \ref{fig:test_22_13_8}  show, for several valuresof $k$, the square norm of $\Delta_n=T_n-I_n$, obtained with double precision arithmetic, for increasing $n$, from $n=20$ up to $n=1000$.

\begin{figure}
	\includegraphics[width=\textwidth]{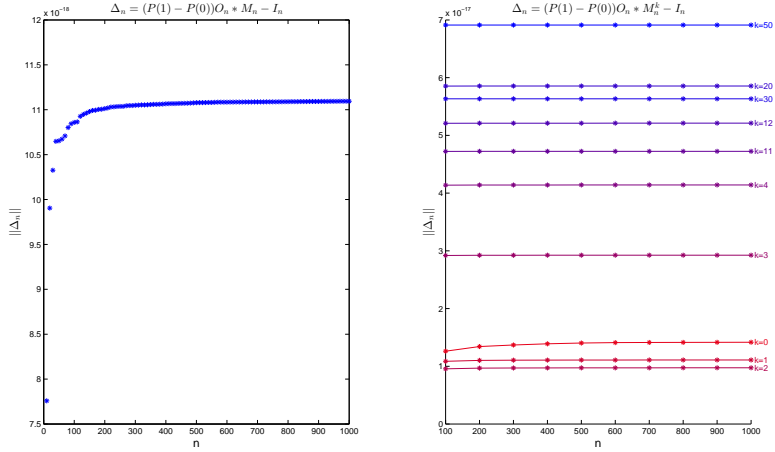}
	\caption{Error propagation $||(P(1)-P(0))M_{n}^k O_{n}-I_{n}||_2$  for distinct values for $k$ and $n=20,40,\ldots,1000$.}
	\label{fig:test_22_13_8} 
\end{figure}

We observe that the error propagation due to the evaluation of Legendre $M$ and $O$ operational matrices in double precision arithmetic, is slowly increasing with $n$ and with $k$. Despite this, even for $k=50$ and $n=1000$ the error square norm of $(P(1)-P(0))M_{n}^k O_{n}-I_{n}$ is negligible in double precision.

\subsection{Generating Functions}\label{GF}

To test the errors propagation in higher powers of $N$ and $O$ matrices, for some classical orthogonal polynomials $P$ we have selected an associated generating function, as in  \cite{AbSteg}, 
\[ g=\sum_{i=0}^{\infty} g_i z^i P_i(x).\] 
From those functions we build a set of differential operators $\mathcal{D}_k$ and integral operators $\mathcal{S}_k$ such that $\mathcal{D}_k(g)=0$ and $\mathcal{S}_k(g)=0$. So, if $D_n(k)$ and $S_n(k)$ are the matrices representing the action of $\mathcal{D}_k$ and $\mathcal{S}_k$, respectively, truncated to dimension $n\times n$, and if $a_n=(g_0, g_1 z, g_2 z^2,\ldots,g_{n-1}z^{n-1})$ are the coefficients vector of a partial sum of $g$, then $D_n(k)a_n$ and $S_n(k)a_n$ are residuals vectors, approaching the null vector as $n$ goes to infinity.

In {T}able \ref{tab:test_22_9} we present matrices $D(k)$ and $S(k)$, together with the associated generating functions, selected for our numerical tests.

\begin{table}[hbp]
	\centering\large
	\caption{Matrices $D(k)$, $S(k)$, coefficients $g_i$ and generating function $g$ in (\ref{GF}), $R=(1+z^2)I-2zM$, $r=1+z^2-2zx$.}
	\begin{tabular}{l|l|l|l|l}\hline\hline
		$P_j$ & $D(k)$ & $S(k)$ & $g_{i}$ & $g$ \\ \hline
		$C_j^{(\lambda)}$ & $R N^{k+1}-2(\lambda+k)z N^{k}$ & $(2z)^k (\lambda-k)_k O^k-R^k$  & $1$ & $r^{-\lambda}$ \\
		$T_j$ & $R N^{k+1}-2kz N^{k}$ &   & $\frac{1}{i}$ & $1-\frac{1}{2}\ln(r)$ \\
		$U_j$ & $R N^{k+1}-2(k+1)z N^{k}$ &   & $1$ & $r^{-1}$ \\
		$P_j$ & $R N^{k+1}-(2k+1)z N^{k}$ & $(2k-1)!!z^k O^k-(-R)^k$  & $1$ & $r^{-1/2}$ \\
		$L_j$ & $(z-1)^k N^{k} - z^k I $ & $z^k O^{k} - (z-1)^k I $  & $1$ & $\frac{e^{zx/(z-1)}}{1-z}$ \\
		$H_j$ & $N^{k} - (2z)^k I $ & $(2z)^k O^{k} - I $  & $\frac{1}{i!}$ & $e^{2zx-z^2}$ \\ \hline
	\end{tabular}
	\label{tab:test_22_9}
\end{table}

\begin{figure}
	\includegraphics[width=\textwidth]{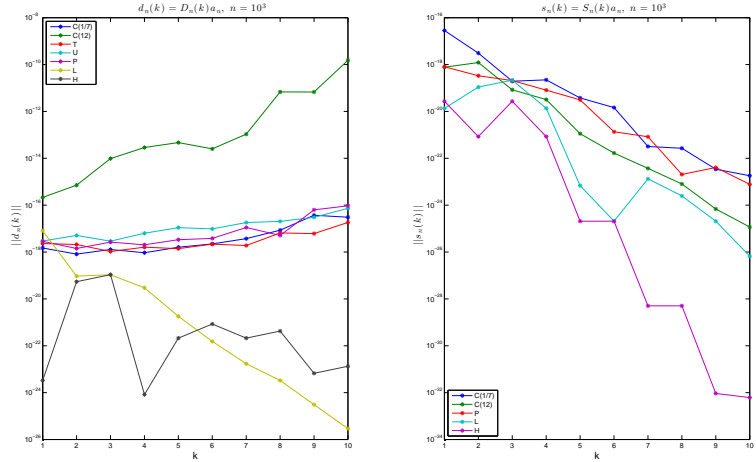}
	\caption{Error propagation $||d_n(k)||_2$, and $||s_n(k)||_2$ and $a_n$ results from $G(k)$ and $a$, as defined in {T}able \ref{tab:test_22_9}, truncated to dimension $n=1000$, $z=1/10$.}
	\label{fig:test_22_9} 
\end{figure}

\section{Conclusions} 
In this paper we have introduced a set of formulas intended to evaluate the generalized Fourier coefficients of the transforms of orthogonal polynomials by integro-differential operators. These formulas are suitable to evaluate the whole set of coefficients of an orthogonal polynomials base transformed by integral and differential operations.

In the case of general orthogonal polynomial bases, we have recursive formulas, allowing to evaluate the matrices representing the action of integral and differential operators over those bases. For the case of the most used classical orthogonal polynomials, from the recursive general formulas, we arrive at explicit formulas. Some of those formulas are already known in the literature, but we believe some others  are quite new.

Another feature of these formulas is that all of them result only from the recurrence relation characteristic of orthogonal polynomials, avoiding the need for additional information. This allows to obtain   the matrix representation of the action of integral and differential operations, represented in an orthogonal polynomial basis, only from the parameters of its three terms recurrence relation.

Finally, we have proposed numerical tests to stress the behaviour of those matrix representations when evaluated in finite arithmetic. Based on orthogonal polynomials properties, we measured the error propagation introduced by double-precision arithmetic, in several integral and differential operations. Numerical results indicate that the formulas introduced in this work are sufficiently robust to produce highly accurate results, even when dealing with powers of high dimensional matrices.



\end{document}